\newcommand{\dd}{\mathrm{d}}
\newcommand{\rrvert}{\vert}
\newcommand{\llvert}{\vert}
\newcommand{\implies}{\Longrightarrow}
\newtheorem{theor}{Theorem}[section]
\newtheorem{Lem}[theor]{Lemma}
\newtheorem{Prop}[theor]{Proposition}
\newtheorem{cor}[theor]{Corollary}
\newtheorem{hyp}{Hypothesis}
\newcommand{\xrightarrow}[1]{\stackrel{#1}{-\!\!\!-\!\!\!\longrightarrow}}
\newcommand{\N}{\mathbb N}
\newcommand{\R}{\mathbb R}
\newcommand{\E}{\mathrm E}
\newcommand{\Var}{\operatorname{Var}}
\newcommand{\p}{\mathrm P}
\begin{document}
\begin{frontmatter}

\title{Resource dependent branching processes\\and the envelope of societies}
\runtitle{The envelope of societies}

\begin{aug}
\author[A]{\fnms{F. Thomas} \snm{Bruss}}
\and
\author[A]{\fnms{Mitia} \snm{Duerinckx}\corref{}\ead[label=e1]{mduerinc@ulb.ac.be}}
\runauthor{F. T. Bruss and M. Duerinckx}
\affiliation{Universit\'e Libre de Bruxelles}
\address[A]{D\'epartement de Math\'ematique\\
Facult\'e des sciences\\
Universit\'e Libre de Bruxelles\\
CP 210, B-1050 Brussels\\
Belgium\\
\printead{e1}} %adresu isvedimo komanda gale!
\end{aug}

% HISTORY:
\received{\smonth{11} \syear{2012}}
\revised{\smonth{10} \syear{2013}}

% ABSTRACT
%
\begin{abstract}
Since its early beginnings, mankind has put to test many different
society forms, and this fact raises a complex of interesting questions.
The objective of this paper is to present a general population model
which takes essential features of any society into account and which
gives interesting answers on the basis of only two natural hypotheses.
One is that societies want to survive, the second, that individuals in
a society would, in general, like to increase their standard of living.
We start by presenting a mathematical model, which may be seen as a
particular type of a controlled branching process. All conditions of
the model are justified and interpreted. After several preliminary
results about societies in general we can show that two society forms
should attract particular attention, both from a qualitative and a
quantitative point of view. These are the so-called weakest-first
society and the strongest-first society. In particular we prove then
that these two societies stand out since they form an envelope of all
possible societies in a sense we will make precise. This result (the
envelopment theorem) is seen as significant because it is paralleled
with precise survival criteria for the enveloping societies. Moreover,
given that one of the ``limiting'' societies can be seen as an extreme
form of communism, and the other one as being close to an extreme
version of capitalism, we conclude that, remarkably, humanity is close
to having already tested the limits.
\end{abstract}

% KEYWORDS
% Pirmas kwd is didziosios raides
%
\begin{keyword}[class=AMS]
\kwd[Primary ]{69J85}
\kwd{60J05}
\kwd[; secondary ]{60G40}
\end{keyword}
\begin{keyword}
\kwd{Controlled branching processes}
\kwd{extinction criteria}
\kwd{Borel--Cantelli lemma}
\kwd{almost-sure convergence}
\kwd{complete convergence}
\kwd{order statistics}
\kwd{stopping times}
\kwd{Galton--Watson processes}
\kwd{Lorenz curve}
\kwd{society structures}
\kwd{laissez-faire society}
\kwd{mercantilism}
\kwd{communism}
\kwd{capitalism}
\end{keyword}

\end{frontmatter}

%s1 #&#
\section{Introduction}
What is the goal of any society? Are there natural boundaries for
societies mankind would not or cannot exceed? And if so, can we
quantify the critical parameters characterizing these boundaries?
Certain aspects of these questions are equally interesting for animal
societies; in fact, throughout this paper we shall always speak of
``individuals'' to make clear that, although the motivation stems from
thinking about man, we keep general populations in mind.

The first question is partially philosophical, and we only treat it in
as much as it concerns the subsequent questions. Here we shall provide
a mathematical answer obtained from a model we propose as a global
mathematical model for societies. This model is built on branching
processes and submitted to two natural hypotheses. Still rudimentary,
the model is broad enough to allow for essential features of life
within any society: reproduction of individuals, the desire to have a
future, heritage and production of resources, consumption of resources,
policies to distribute resources among individuals, and, as a tool of
interaction, the right of emigration. We look at different sub-models
of the model, characterizing different societies. These are defined by
the type of control they exercise through different policies to
distribute resources among their individuals.

%s1.1 #&#
\subsection{Objectives of societies}
Any society is likely to advertise certain keywords in its program or
mission statement, such as justice, liberty, equal opportunity, etc. We
all agree that these issues are likely to be important. However, there
may be as many different interpretations of them as there are
individuals in a population. Hence, within a whole population they can
hardly serve as real guidelines for the choice of a specific society
form. We conclude that any reasonable approach must be more focused.

The philosophy of our approach to answering questions about the choice
of a society is therefore to focus on factors which are seen as
dominant, namely those
which come out of two natural and seemingly inoffensive hypotheses:

\begin{hyp}\label{hyp1}
Individuals want to survive
and to see a future for their descendants.
\end{hyp}

\begin{hyp}\label{hyp2}
Individuals prefer, in general, a higher
standard of living to a lower one.
\end{hyp}

Since these hypotheses may not be compatible with each other,
we define Hypothesis~\ref{hyp1} to have a higher priority than Hypothesis~\ref{hyp2}.

Other hypotheses may be implicit. For instance, the desire to have
security is implicit in Hypothesis~\ref{hyp2}. If the standard of living is
sufficiently high, the society can afford a qualified police force or a
strong army.

To deal with these hypotheses in an adequate way, the problem is to
find a suitable model. This requires two important conditions. First,
the model should allow for all mentioned features which are seen as
essential for the development of a human society and also for a clear
interaction of individuals within the society. Second, it should be
sufficiently tractable to allow for quantifiable conclusions.

%s1.2 #&#
\subsection{History of results}

%F. Thomas Bruss
The first-named author
has been thinking about ways to model societies
for many years. He had given a first talk on \textit{resource dependent
branching processes} in 1983, a second around 1995 and a third in 2001.
Although the publications~\citet{Bruss84} and~\citet{Bruss91} were
motivated by thinking about such processes, this is the very first
paper devoted to this subject.

In the beginning, only preliminary results about necessary conditions
for survival were obtained, and only for an elementary model. These
results were based on earlier work on branching processes with random
absorbing processes, on \mbox{$\varphi$-}branching processes, and on different
forms of the Borel--Cantelli lemma.

In a second step, several models were tested until the model presented
here took its approximate shape. When seeing, in a different context,
the article by \citet{Coffman87}, the results were sharpened to our
needs in~\citet{Bruss91}. These opened the way to quantifiable
conclusions for the chosen model and thus to survival criteria for
several special societies.

In a third step it became visible that, in any reasonable model, two
societies deserved special attention. These are what we call the \textit{strongest-first society} (\mbox{s.f.-}society), and the \textit{weakest-first
society} (w.f.-society). A survival criterion for the w.f.-society was
proved; survival criteria for the s.f.-society were tested, and the idea
of a theorem of envelopment began to emerge.

The fourth step (with the co-author) brought a broad definition of
general policies as well as a proof of a survival criterion for the
s.f.-process. It also led to the precise formulation and proof of \textit{the envelopment theorem for societies. }This theorem says (in both a
conditional and an unconditional form) that all societies are bound to
live in the long run between the s.f.-society and the w.f.-society.
Combined with all earlier findings, we think this is a fundamental result.

%s1.3 #&#
\subsection{Related work}
Our model is an asexual controlled branching process (BP), where \textit{controlled} should be understood in an interacting sense. The general
control is governed by functions of sums of dependent variables, and
self-imposed. This strong dependence property excludes the generating
function machinery, of course. Moreover, although still rudimentary,
the model seems no longer to profit from martingale arguments.

Early work on controlled BPs confined interest to control through
bounds imposed on the growth of Galton--Watson-type processes. \citet{Zubkov74}, \citet{Schuh76} and others modified the number of
individuals which are allowed to reproduce in each generation by
corresponding deterministic functions. \citet{Bruss78} considered a
Galton--Watson process (GWP) with a nonspecified absorbing process for
which only the expected influence is known.

\citet{Yanev76} studied so-called $\phi$-branching processes where the
growth of the GWP reproduction is controlled by random numbers of
offspring which are allowed to reproduce. A more general model for
random control functions was studied in~\citet{Bruss80}, and again in
more generality, by \citet{Gonzales02}. The same authors also examined
$L_2$-convergence for such processes; see \citet{Gonzales05}.

Population-size dependence is another interesting access to control in
BP models. These were studied by \citet{Klebaner85} and \citet{Klebaner86}. \citet{Mannor12} proposed a special class of controlled
BPs involving a different notion of ``resources.'' Motivated by
applications in marketing, the objective is to control independent
subpopulations (multi-type model) in such a way that they grow as
quickly as possible. Relative frequencies of types were studied
in \citet{Yakovlev09}.

The model presented in this paper is neither a BP with varying
environment [see, e.g., \citet{Cohn96}] nor a BP with random
environment. See \citet{Jagers75} for a clear analysis of the connection
between these two types, and, for example,  \citet{Haccou07} for newer
developments. Our model is neither a multi-type BP  nor a pure
population size-dependent model. It is a Markov process, as we shall
see, but no phase-type Markov model or decomposable BP [see \citet{Hautphenne12}] can play the control we have in mind.

Hence, our model does not fit these or similar models studied in the
literature. Nevertheless, related work is sincerely acknowledged. It
has helped, over the years, to get a feeling of what result one can, or
cannot, possibly hope for.

%s2 #&#
\section{The model}
We consider a population, beginning at time $0$ with a fixed number of
individuals, which reproduce at distinct times $n\in\N_0$. The time
interval $[n,n+1)$ is called the $n$th \textit{generation}. Individuals
consume resources and create new resources for their descendants. Only
those descendants whose resource claims will be met by society will
stay within the population until the next reproduction time; the others
are supposed to emigrate (or die) before reproduction. We first define
all the components of the model.

%s2.1 #&#
\subsection{Reproduction}
Individuals are supposed to reproduce independently of each other. The
model supposes that reproduction is asexual. The number of descendants
of each individual is modeled according to a common probability law
$(p_j)_{j\in\N}$, where $p_j$ denotes the probability that a given
individual will have exactly $j$ offspring. To avoid trivial cases, we
suppose $p_0>0$ and $p_j>0$ for at least some $j>1$. Let $D_n^k$ denote
the number of descendants of the $k$th individual in the $n$th
generation. Hence $\p[D_n^k=j]=p_j$, for all $n\in\N,k\in\N_0$ and all
$j\in\N$. The infinite double-array $(D_n^k)_{n\in\N,k\in\N_0}$, named
\textit{reproduction matrix}, thus consists of independent identically
distributed (i.i.d.) integer-valued nonnegative random variables with
mean $m:=\E(D_n^k)<\infty$.

%s2.2 #&#
\subsection{Resources and resource space}
Human beings need food; they need resources. They also reproduce, and
thus they need resources for their descendants. Hence they must save
resources and create resources for future generations.

In our model, individuals inherit resources from preceding generations,
consume resources and create new resources. The resources an individual
can use during his lifetime determines his standard of living. The
society decides in what way resources are distributed among the
individuals, or expressed differently, it is the acceptance of policies
to distribute resources that defines a society. The inherited
resources, plus the newly created ones, are, after deduction of
consumption, considered to be the individual's contribution to the \textit{common} resources of the society, called the resource space.

We do not distinguish between heritage, new production and
nonconsumption of resources and summarize heritage plus production
minus consumption as \textit{creation} of resources. Resource creations of
individuals are modeled as i.i.d. \mbox{real-valued} nonnegative random
variables $R_n^k$, $n\in\N,k\in\N_0$, and the infinite double array
$(R_n^k)_{n\in\N,k\in\N_0}$ will be called \textit{resource creation
matrix}. We suppose that $r:=\E(R_n^k)<\infty$.

%s2.3 #&#
\subsection{Objective of survival}
The population's desire to survive is understood as the objective to
have for the society as a whole a positive probability of surviving
forever. If certain rules to distribute resources allow for a positive
probability of survival, and if other rules do not achieve this, then
the objective to survive takes priority, and the rules are changed
accordingly. It suffices to see changes as an omnipresent option and to
think of the rules defining the society, even if they had been changed
many times before, as being fixed from today onward for the whole
future. (This ``fixed future-instant control'' assumption has the
advantage that society need not be expected to have long-term
prophetical abilities.)

%s2.4 #&#
\subsection{Resource claims within a society}
The model interprets for each descendant, the individual claim of
resources as the outcome of two random components. One is the
descendant's desire to have a certain amount of resources, and the
other is what the descendant, with its own power of conviction, will be
able to defend among its competitors within the society.

These random claims of individuals are modeled as i.i.d. real-valued
nonnegative random variables governed by a known continuous
distribution function $F$. If there are $t_n$ descendants in the $n$th
generation they generate a \textit{string of claims} $(X_n^1, X_n^2,\ldots, X_n^{t_n})$. The\vspace*{1pt} infinite double array $(X_n^k)_{n\in\N,k\in\N_0}$,
is called \textit{claim matrix}. We have $F(x)=\p[X_j^k \le x]$ and
suppose $\mu:=\break \E(X)<\infty$.

%s2.5 #&#
\subsection{Interaction of individuals and society}

Each individual is supposed to have the right to emigrate, and the
control instrument is the right to exercise the option of emigration.
To fix the rules, we suppose that an individual emigrates if and only
if his individual resource claim is not completely satisfied by the
society; otherwise he remains a member of the population until the end
of the generation. Emigration is supposed to happen before an
individual produces offspring. Hence each individual resource
assignment (seen as the individual standard of living offered by the
society) is felt by an individual as being either sufficient, implying
``stay,'' or else insufficient, implying ``leave.''

Typically, the total resource space created by a generation is
insufficient to satisfy all the resource claims of the offspring. We
define a \textit{policy} as a function which determines then a priority
order among offspring, that is, a rule to distribute the resources
created by the current generation among the next generation.

%s2.5.1 #&#
\subsubsection{Examples}
To keep examples simple we use here positive integers for claims and
available resources; this is not required in reality, of course. Any
random claim expresses the number of units of resources the individual requires.
Assume, for instance, that, in a given generation, the number of
individuals is $10$, and that the current resource space is $100$.
Suppose further that the individuals write down their claims on a list
in some order, as, for instance, in chronological order of arrival of
claims, and that the string of claims reads
\[
11, 7, 15, 19, 11, 18, 10, 22, 17, 19.
\]
The ``first-come-first-served'' society would grant the first seven
claims\break (adding up to $91$), and the last three applicants would then
have to emigrate. The $9$ remaining units may be divided among the
seven or put back into the common resources. (Details on this level
will not matter for our results.)
A society that distributes resources in f.c.f.s.-order may not have much
appeal, as one may argue. However there are certainly more foolish
policies, as, for example, the ``coin-flipping policy'' which chooses
the priority order at random. Any procedure to select a priority of
claims is considered a policy.

In the sequel, two particular policies will attract our special
interest: the first one, called the weakest-first society, satisfies
the smallest claims first and would thus retain, in the example above,
the claims $7,10,11,11,15,17,18$, while the other, called the
strongest-first society, satisfies the largest claims first and would
thus retain the claims $22,19,19,18,17$.

%s2.6 #&#
\subsection{Resource dependent branching processes}
We now give a precise definition of the type of population processes we
consider in this paper. Two definitions are needed. Let $((D^k_n)_{n\in
\N,k\in\N_0},(X^k_n)_{n\in\N,k\in\N_0},(R^k_n)_{n\in\N,k\in\N
_0})$ be a
triplet of independent double arrays of i.i.d. random variables defined
on a probability space $(\Omega,\mathcal F,\p)$. As before, the
variables $D_n^k$, $X_n^k$ and $R_n^k$ ($k\in\N_0$) represent the
number of offspring, the resource claims and the production of
resources (resp.) of each individual (labeled by $k$) in generation
$n$. We always assume that these variables satisfy the natural
regularity conditions given below; see Section~\ref{chapnatregcond}.
Let
%
%e1 #&#
%
\begin{equation}
\label{sums} D_n(k):=\sum_{j=1}^kD_n^j
\quad\mbox{and}\quad R_n(k):=\sum_{j=1}^kR_n^j
\end{equation}
denote the total number of offspring and the total resources created by
generation~$n$, respectively, given that generation $n$ counts $k$
individuals. The i.i.d. assumptions for random variables within the
same double array allow us to use the shorter notation $D(k)=D_n(k)$
and $R(k)=R_n(k)$ whenever we limit our interest to their \textit{distributional} prescriptions. Conversely, this is understood
throughout the paper whenever we use this simplified notation.

We first need a precise definition of a policy:

%de2.1 #&#
%
\begin{defin}[(Global definition of a policy)]\label{defpol}
A \textit{policy} is a sequence $\pi=(\pi_t)_{t\in\N}$, where, for all
$t\in\N$, $\pi_t$ is a function associating to any $t$-uple
$(x_k)_{k=1}^t\in(\R^+)^t$ a permutation $\pi_t((x_k)_{k=1}^t)\in
\operatorname{Sym}(t)$ of the set $[t]:=\{1,\ldots,t\}$.
\end{defin}

In this definition, $t$ corresponds to the number of offspring, and
$(x_k)_{k=1}^t$ to their respective resource claims. The permutation
$\pi_t((x_k)_{k=1}^t)\in\operatorname{Sym}(t)$ then gives the priority order
that the society has chosen to satisfy the claims of the offspring: the
individual $\pi_t((x_k)_{k=1}^t)(1)$ is the first served, etc. If $s$
denotes the total of resources produced by the previous generation, the
number of offspring having their claims completely satisfied thanks to
the society's policy $\pi$ is thus defined by
\[
Q^\pi \bigl(t,(x_k)_{k=1}^t,s
\bigr)= \cases{0,\qquad\mbox{if $t=0$ or $x_{\pi_t((x_k)_{k=1}^t)(1)}>s$},
\cr
\displaystyle\max \Biggl\{1\le k\le t\dvtx \sum_{j=1}^kx_{\pi
_t((x_k)_{k=1}^t)(j)}
\le s \Biggr\},
\cr
\hspace*{32.5pt} \mbox{otherwise.}}
\]
Note that this function $Q^\pi$ necessarily satisfies
\[
Q^\pi (0,\varnothing,s )=0=Q^\pi \bigl(t,(x_k)_{k=1}^t,0
\bigr)\quad\mbox{and}\quad0\le Q^\pi \bigl(t,(x_k)_{k=1}^t,s
\bigr)\le t,
\]
for all $s\in\mathbb R^+$, all $t\in\mathbb N$ and all
$(x_k)_{k=1}^t\in
(\R^+)^t$. Recall that all the offsprings that are not completely
satisfied, and only these, leave the society forever. This leads to the
definition of the following stochastic process:

%de2.2 #&#
%
\begin{defin}[(Global model)]
If $\pi$ is a policy, the \emph{resource dependent branching process}
(RDBP) \emph{on $(D_n^k,X_n^k,R_n^k)_{n,k}$ controlled by $\pi$} is
defined as the integer-valued, nonnegative stochastic process $(\Gamma
_n)_{n\in\N}$, defined by $\Gamma_0=1$ and recursively
\[
\Gamma_{n+1}=Q^\pi \bigl(D_n(
\Gamma_n),\bigl(X_n^k\bigr)_{k=1}^{D_n(\Gamma
_n)},R_n(
\Gamma_n) \bigr),
\]
where $D_n(\cdot)$ and $R_n(\cdot)$ are given by equation~(\ref{sums}).
\end{defin}

%s2.6.1 #&#
\subsubsection{Remarks}
(i) The notation $(\Gamma_n)_n$ is mnemonic for ``general'' in the sense
that the policy $\pi$ in $Q^\pi$ is not specified, and this is
maintained throughout this paper.

(ii) Unless specified otherwise, each process in
this paper is supposed to start at time $0$ at level $1$; exceptions to
this will be clearly indicated.

(iii) Concerning all independence assumptions, we
realize, of course, that in a convincing model, the random variables
$D_n^k$, $R_n^k$ and $X_n^k$ should allow for some interaction
(dependence), and the i.i.d. assumption is primarily made for
simplicity. However, it is important to note that,
in our setting, this assumption is less restrictive than it may seem.
Indeed, recall that Hypothesis~\ref{hyp1} is given priority to Hypothesis~\ref{hyp2}. If
a population wants to know whether survival is possible, it must look
at the current situation, because we do not assume in the model that
the population knows more about the long-term future. Therefore the
question is what would happen if the current situation were maintained
for the future. Each time a change is warranted, for instance, an
encouragement to have more descendants, or to increase resource
creation, the matrices can be exchanged. It is this \textit{instant
control} mentioned earlier which gives considerable support to all
independence assumptions.

%s2.7 #&#
\subsection{Regularity assumptions}\label{chapnatregcond}
We suppose that the following assumptions are always satisfied:

\begin{longlist}[(iii)]
\item$1<m<\infty$, $r<\infty$ and $0<\mu<\infty$;
\item$p_0>0$ and there exists some $k\ge2$ with $p_k>0$;

\item the trio of laws of reproduction, creation of resources and
claims is compatible with a positive probability, however small it
might be, that the process can reach any finite state;\vspace*{1pt}
\item the variables $(D_n^k)_{n,k}$, $(R_n^k)_{n,k}$ and
$(X_n^k)_{n,k}$ all have finite variance;
\item(the random variables $D_n^k$, $R_n^k$ and $X_n^k$ are all bounded).
\end{longlist}

%s2.7.1 #&#
\subsubsection{Justification of assumptions}\label{chapjustif}
In assumption~(i), the conditions $m>1$ and $\mu>0$ do not restrict
generality: the case $m\le1$ is trivial because then \textit{any} RDBP is
stochastically smaller than a subcritical GWP. With the natural
condition $p_0>0$ of (ii) it is bound to die out. The case $\mu=0$
implies that $(X_n^k)_{n,k}$ consists only of $0$'s, so that the
process coincides with the standard GWP. Survival is thus possible if
and only if $m>1$, implying $p_k>0$ for some $k \ge2$,
hence (ii).

Assumption (iii) ensures that the process can grow. It is, for instance,
satisfied if we assume that $F(r/k)>0$ for some $k\ge2$ with $p_k>0$.
Note that this assumption becomes superfluous if we replace the initial
setting $\Gamma_0=1$ by $\Gamma_0=L$ for some $L$ sufficiently large.

The assumption of finite variances of all random variables is needed
for our results and is also completely realistic.

Finally, assumption (v) of boundedness is, apart for two results (i.e.,
Theorems~\ref{th2} and~\ref{thbounds}), not needed and therefore put
in brackets. Note that even this stronger assumption is well defendable
in our model, at least for human societies.

%s2.8 #&#
\subsection{Multi-parameter policies}
According to our definition, a policy can only depend on the available
resources and on the claims of the offspring. However, in more
realistic models, the offspring could be characterized by many other
different parameters, and it would be natural to allow a policy to
depend on all these additional parameters. This is why, although we do
not pursue such general models in this paper, we will indicate shortly
how to adapt our definitions accordingly.

We consider a new double array $({\mathbf Y}_n^k)_{n,k}$ of i.i.d. random
$p$-vectors defined on a corresponding probability space $(\Omega,\mathcal F,P)$. Here, the components of the random vectors ${\mathbf
Y}_n^k$ ($k\in\N_0$) correspond to the different characteristic
parameters of each individual (labeled by $k$) in generation $n$. For
some fixed $p\ge0$, a \textit{$p$-parameter policy} is any sequence
$\pi
=(\pi_t)_{t\in\N}$, where, for all $t\in\N$, $\pi_t$ is a function
associating to any $t$-uple $(x_k,{\mathbf y}_k)_{k=1}^t\in(\R^+\times
B)^t$ a permutation $\pi_t((x_k,{\mathbf y}_k)_{k=1}^t)\in\operatorname{Sym}(t)$
of the set $[t]:=\{1,\ldots,t\}$, where $B\subset\R^{p}$ denotes the
set of possible parameter values. The associated counting function and
the associated RDBP are defined as before. For instance, the
coin-flipping policy could be seen as a trivial example of a
multi-parameter policy, where the coin-flipping parameter actually
determines the whole policy.

Note that the situation is trivial when the additional parameters of an
individual are assumed to be independent of its number of offspring,
its resource claim and its resource production, and when we consider
some multi-parameter policy that only depends on these additional
parameters (but not on the resource claims): in this case, the
associated RDBP has exactly the same behavior as the f.c.f.s.-process (as
defined below). In general, the dependence may of course lead to highly
complex situations.

%s3 #&#
\section{Particular policies}\label{expol}
In the following, we define policies of particular interest. The first
will be a neutral policy, which we call the \textit{first-come-first-served} policy. It will serve as a point of comparison
with the \textit{weakest-first} policy and the \textit{strongest-first}
policy defined later.

%s3.1 #&#
\subsection{First-come-first-served policy}
The f.c.f.s.-policy is a neutral policy in the sense that it serves the
claims according to their respective arrival times. To exclude
ambiguities in the definition, these arrivals of claims are supposed to
happen at the beginning of each generation, being almost surely
different, and all preceding the times of producing offspring.

%de3.1 #&#
%
\begin{defin}
The \textit{first-come-first-served policy} (f.c.f.s.-policy) is the policy
$\pi^U$ defined by $\pi_t^U((x_k)_{k=1}^t)=\mathrm{id}_{[t]}$.\footnote
{The notation $\pi^U$ should remind of the unordered $x_t^1,\ldots,x_t^t$ used in the definition.}
\end{defin}

The associated function $C:=Q^{\pi^U}$ counting the
individuals staying in the process is
\begin{eqnarray*}
C \bigl(t,(x_k)_{k=1}^t,s \bigr)= %
\cases{ 0, & \quad if $t=0$ or $x_1>s$;
\cr
\displaystyle
\operatorname{sup} \Biggl\{1\le k\le t\dvtx  \sum_{j=1}^k
x_j\le s \Biggr\}, &\quad otherwise.} %
\end{eqnarray*}

%de3.2 #&#
%
\begin{defin}
The \emph{first-come-first-served process} (f.c.f.s.-process) on
$(X_n^k,D_n^k,R_n^k)_{n,k}$ is the RDBP controlled by $\pi^U$, that is,
the stochastic process $(U_n)_{n\in\mathbb N}$ defined by $U_0=1$, and
recursively by
\[
U_{n+1}=C \bigl(D_n(U_n),
\bigl(X_n^k\bigr)_{k=1}^{D_n(U_n)},R_n(U_n)
\bigr).
\]
\end{defin}

Note that $C(t,(X^k_n)_{k=1}^t,s)+1$ is a stopping time with
respect to the natural filtration $({\mathcal F}_\ell)_\ell$, where
${\mathcal
F}_\ell$ denotes the $\sigma$-field generated by the $X_n^k$'s for
$1\le k\le\ell$. It is useful to refer to stopping-time properties
because frequently we use results which become intuitive if we think of
a version of ``Wald's lemma'' for curtailed random variables; see
Section~4 of \citet{Bruss91}.

\subsubsection*{Interpretation and properties} The f.c.f.s.-society may be seen as a
model of a \textit{laissez-faire} society. When individuals are born, they
are assumed to arrive at different times within their generation at
maturity and then submit their random resource claims. This continues
as long as resources are available. Since the claims are i.i.d. random
variables, it is not the society but the scarcity of resources that
imposes constraints. This process has some similarity with the GWP
because, for given distributions of resource creation and claims, the
claims curtail the effective mean $m$ of the offspring distribution
$(p_k)_k$. However, given that the process depends in each generation
on common resources, the similarity with a GWP is still rather limited.

%s3.2 #&#
\subsection{Weakest-first policy}
The weakest-first policy (w.f.-policy) is an extreme policy, giving
priority successively to the least demanding currently remaining offspring.

%de3.3 #&#
%
\begin{defin}\label{defNpol}
The \textit{weakest-first policy} (w.f.-policy) is the policy $\pi^W$
defined by $\pi_t^W((x_k)_{k=1}^t)=\sigma$, where $\sigma$ is the
permutation of $[t]$ such that $x_{\sigma(1)}\le\cdots\le x_{\sigma(t)}$.
\end{defin}

Throughout this paper, for i.i.d. realizations
$(x_k)_{k=1}^t$ of the random variable $X$, the increasing order
statistics will be denoted by $x_{1,t}\le x_{2,t}\le\cdots\le x_{t,t}$.
The associated counting function $N:=Q^{\pi^W}$ is now
%
%e2 #&#
%
\begin{equation}
\label{defN} N \bigl(t,(x_k)_{k=1}^t,s \bigr)=
\cases{ 0, & \quad if $t=0$ or $x_{1,t}>s$,
\cr
\displaystyle\sup \Biggl\{1\le k\le t\dvtx  \sum_{j=1}^k
x_{j,t}\le s \Biggr\}, & \quad otherwise.} \hspace*{-25pt}
\end{equation}

%de3.4 #&#
%
\begin{defin}\label{defw.f.}
The \emph{weakest-first process} (w.f.-process) on
$(D_n^k,X_n^k,\break R_n^k)_{n,k}$ is the RDBP controlled by $\pi^W$, that is,
the stochastic process $(W_n)_{n\in\mathbb N}$ defined by $W_0=1$, and
recursively by
%
%e3 #&#
%
\begin{equation}
W_{n+1}=N \bigl(D_n(W_n),
\bigl(X_n^k\bigr)_{k=1}^{D_n(W_n)},R_n(W_n)
\bigr).
\end{equation}
\end{defin}

Note that $N(\cdot,\cdot,\cdot)$ counts the maximal number of
increasing order statistics of the random sample $(x_k)_{k=1}^t$ which,
starting with the smallest, can be summed up without exceeding $s$.
Further, $N(t,(X_n^k)_{k=1}^t,s)+1$ is a stopping time on the
filtration $({\mathcal F}_\ell^I)_\ell$ say, generated by the $\ell$ first
increasing order statistics from all order statistics, beginning with
the smallest one, but it is not a stopping time with respect to the
natural filtration $({\mathcal F}_\ell)_\ell$.

\subsection*{Interpretation and properties} The policy of the w.f.-society is to
support always the weakest. In that respect it comes close to the ideas
of socialism and communism. In each generation, individuals are ordered
according to their resource claims, and these order statistics are
highly dependent of each other.

The following lemma will be needed throughout.

%
%le3.5 #&#
%
\begin{Lem}\label{lemN}
$N(t,(x_k)_{k=1}^t,s)$ is increasing in both $t$ and $s$.
\end{Lem}

\begin{pf}
This follows immediately from Definition~\ref{defNpol}.
\end{pf}

%s3.3 #&#
\subsection{Strongest-first policy}
The strongest-first policy (s.f.-policy) gives successively priority to
the most demanding currently remaining offspring, that is to the
largest random claims.

%de3.6 #&#
%
\begin{defin}
The \textit{strongest-first policy} (s.f.-policy) is the policy $\pi^S$
defined by $\pi_t^S((x_k)_{k=1}^t)=\sigma$, where $\sigma$ is the
permutation of $[t]$ such that $x_{\sigma(1)}\ge\cdots\ge x_{\sigma(t)}$.
\end{defin}

The associated counting function $M:=Q^{\pi^S}$ becomes
%
%e4 #&#
%
\begin{equation}
\label{defM} M \bigl(t,(x_k)_{k=1}^t,s \bigr)=
\cases{ 0, \qquad\mbox{if $t=0$ or $x_{t,t}>s$,}
\cr
\displaystyle
\sup \Biggl\{1\le k\le t\dvtx  \sum_{j=t-k+1}^tx_{j,t}
\le s \Biggr\},
\cr
\hspace*{32.5pt} \mbox{otherwise.} }
\end{equation}
It counts the maximal number of decreasing order statistics which can
be summed up, starting with the biggest, without exceeding $s$.

%de3.7 #&#
%
\begin{defin}
The \emph{strongest-first process} (s.f.-process) on
$(X_n^k,D_n^k,\break R_n^k)_{n,k}$ is the RDBP controlled by $\pi^S$, that is,
the stochastic process $(S_n)_{n\in\mathbb N}$ defined by $S_0=1$, and
recursively by
%
%e5 #&#
%
\begin{equation}
S_{n+1}=M \bigl(D_n(S_n),
\bigl(X_n^k\bigr)_{k=1}^{D_n(S_n)},R_n(S_n)
\bigr).
\end{equation}
\end{defin}

We note that $M(t,(X_n^k)_{k=1}^t,s)+1$ is a stopping time on
the filtration $({\mathcal F}_\ell^D)_\ell$ generated by the first
$\ell$
decreasing order statistics of all currently presented claims,
beginning with the largest one. It is again no stopping time on the
natural filtration~$({\mathcal F}_\ell)_\ell$.

\subsubsection*{Interpretation and properties} The s.f.-society is the model which
serves the strongest
individuals first. Since we identified the values of resource claims
with the power to defend these claims, this society shares important
features with free-market policies and an uncontrolled capitalistic society.
Since claims are again highly dependent, the technical difficulty in
this model is comparable with the one evoked for the w.f.-society.

For a closer study of the s.f.-process, we will need later the
following definition:

%de3.8 #&#
%
\begin{defin}
We say that a function $h\dvtx [t_1,t_2]\to\R$ defined on an interval
$[t_1,t_2]\subset\R$ is \textit{cap-unimodal} if it is either monotone, or
else unimodal and cap-shaped, on $[t_1,t_2]$.
\end{defin}

Note that a cap-unimodal function $h$ on $[t_1,t_2]$ satisfies
%
%e6 #&#
%
\begin{equation}
\min_{t\in[t_1,t_2]}h(t)=\min\bigl\{h(t_1),h(t_2)
\bigr\},
\end{equation}
provided that $h$ is defined in both $t_1$ an $t_2$. The following
lemma then contrasts Lemma~\ref{lemN}:

%le3.9 #&#
%
\begin{Lem}\label{lemunimod}
$M(t,(x_k)_{k=1}^t,s)$ is increasing in $s$ for fixed $t$, and, for
fixed $s$, cap-unimodal in $t$ on any interval $[t_1,t_2]$. Further,
$\max_{t\in[t_1,t_2]}M(t,(x_k)_{k=1}^{t},\break s)\le
t_2-t_1+M(t_1,(x_k)_{k=1}^{t_1},s)$.
\end{Lem}

\begin{pf}
See Section~\ref{prmarkov}.
\end{pf}

%re3.10 #&#
%
\begin{rem}
If resources are plenty and suffice to accommodate all claims, then all
policies have the same effect; that is, they allow all individuals to
stay and reproduce. However, if not, the w.f.-society is the one which
allows the maximum number of individuals to stay and to reproduce. The
s.f.-society is then opposite in the sense that the resource space is
used up by the corresponding minimum number of applicants.
\end{rem}

%s4 #&#
\section{Main results}\label{results}
Throughout this section, all RDBPs are supposed to be controlled by
some policy $\pi$ on $(D_n^k,X_n^k,R_n^k)_{n,k}$, where all random
variables satisfy the assumptions of Section~\ref{chapnatregcond}.
%s4.1 #&#
\subsection{Preliminaries}
It is important to first point out that any RDBP shares the following
property, which is typical for many branching processes. Namely, either
it explodes, or it becomes extinct.

%pr4.1 #&#
%
\begin{Prop}[(Markov property)]\label{markov}
Any RDBP $(\Gamma_n)_n$ is a Markov process with a unique absorbing
state, which is $0$. Moreover, it tends a.s. either to $0$ or to~$\infty$.
\end{Prop}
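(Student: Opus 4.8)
The plan is to establish the three assertions in turn---the Markov property, the uniqueness of the absorbing state $0$, and the $0/\infty$ dichotomy---the last being where essentially all the content lies.

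\emph{Markov property and absorption.} I would first observe that the defining recursion can be written as $\Gamma_{n+1}=g(\Gamma_n,\xi_n)$, where $\xi_n:=\big((D_n^k)_k,(X_n^k)_k,(R_n^k)_k\big)$ collects the randomness of generation $n$ and $g$ is the fixed map $(\ell,\xi)\mapsto Q^\pi\big(D_n(\ell),(X_n^k)_{k=1}^{D_n(\ell)},R_n(\ell)\big)$; this $g$ reads only finitely many coordinates of $\xi$ (the first $\ell$ entries of the $D$- and $R$-rows, then the first $D_n(\ell)$ entries of the $X$-row), hence is well defined and measurable. Since the three double arrays are mutually independent and i.i.d.\ in $n$, the sequence $(\xi_n)_{n\ge0}$ is i.i.d., while $\Gamma_n$ is a function of $\xi_0,\dots,\xi_{n-1}$ alone; therefore $\xi_n$ is independent of $\mathcal F_n:=\sigma(\Gamma_0,\dots,\Gamma_n)$, which yields the (time-homogeneous) Markov property with kernel $P(\ell,\cdot)=\p[g(\ell,\xi_0)\in\cdot]$. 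Absorption at $0$ is immediate: $Q^\pi(0,\varnothing,s)=0$ for every $s$, so $g(0,\cdot)\equiv0$. To see that no other state is absorbing, I would note that from a state $\ell\ge1$ the event ``all $\ell$ present individuals have zero offspring'' forces $D_n(\ell)=0$ and hence $\Gamma_{n+1}=0$, so $P(\ell,\{0\})\ge p_0^{\,\ell}>0$ by assumption~(ii). Thus $0$ is the unique absorbing state.

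\emph{The dichotomy.} Set $T_0:=\inf\{n:\Gamma_n=0\}$. The heart of the argument is the claim that, for every fixed $L\ge1$, the event $\{\Gamma_n=L\text{ infinitely often}\}$ has probability $0$. To prove this I would apply a conditional Borel--Cantelli lemma (L\'evy's form) to the events $E_n:=\{\Gamma_n=L\}\cap\{\Gamma_{n+1}=0\}$ and the filtration $(\mathcal F_n)_n$: by the Markov property and the previous bound, $\p[E_n\mid\mathcal F_n]\ge p_0^{\,L}\mathbf 1_{\{\Gamma_n=L\}}$, so on $\{\sum_n\mathbf 1_{\{\Gamma_n=L\}}=\infty\}$ one has $\sum_n\p[E_n\mid\mathcal F_n]=\infty$, whence $E_n$ occurs infinitely often a.s.\ on that event. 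But a single occurrence of $E_n$ already sends the chain into the absorbing state $0$, so $\Gamma_m=L$ can then hold only finitely often---a contradiction unless $\{\Gamma_n=L\text{ i.o.}\}$ is null. Summing over $L$, almost surely $(\Gamma_n)_n$ visits every level $L\ge1$ only finitely many times. Consequently, on $\{T_0=\infty\}$ the chain stays $\ge1$ yet eventually leaves every bounded set, i.e.\ $\Gamma_n\to\infty$; on $\{T_0<\infty\}$ it is eventually $0$, i.e.\ $\Gamma_n\to0$; and these two events partition $\Omega$, which is the assertion.

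\emph{Main obstacle.} No step is deep, but the dichotomy requires choosing the right tool: the one-step escape probabilities $p_0^{\,\ell}$ are not bounded below uniformly in $\ell$ and the relevant events are not independent, so the classical Borel--Cantelli lemma does not apply and one must use its conditional (L\'evy) version. The other point deserving care is pure book-keeping---the passage from ``each level is hit finitely often'' to ``$\Gamma_n\to\infty$ on $\{T_0=\infty\}$'' uses that $(\Gamma_n)_n$ is integer-valued---together with checking that $g$ genuinely depends on only finitely many coordinates of $\xi_n$ so that the Markov property is rigorous.
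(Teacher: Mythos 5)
Your proof is correct and takes essentially the same route as the paper: the same recursion-based argument for the Markov property, the same observation that $Q^\pi(0,\varnothing,s)=0$ makes $0$ absorbing, and the same key bound $\p[\Gamma_{n+1}=0\mid\Gamma_n=\ell]\ge p_0^{\ell}>0$ showing $0$ is accessible from every state, hence the only absorbing one. The sole difference is presentational: the paper concludes $\p[0<\Gamma_n\le s\ \mathrm{i.o.}]=0$ as a standard Markov-chain consequence of this accessibility, whereas you make that final step explicit, level by level, via L\'evy's conditional Borel--Cantelli lemma.
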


\begin{pf}
See Section~\ref{prmarkov}. (The same result remains true
in the multi-parameter case.)
\end{pf}

In accordance with Hypothesis~\ref{hyp1}, we must first answer the question
under which conditions a given RDBP $(\Gamma_n)_n$ can survive, that
is, we must determine when the extinction probability
\[
q_\Gamma=\p \Bigl[ \lim_{n\to\infty}
\Gamma_n=0 \big| \Gamma _0=1 \Bigr]
\]
is equal to $1$.
Note that, in the case $q_\Gamma<1$, the probability of extinction
could intuitively be made arbitrarily small if we replace the initial
setting $\Gamma_0=1$ by $\Gamma_0=M$ for $M$ sufficiently large. We
will in fact prove this for several processes, and for the w.f.-process
this holds even in a stronger form:

%pr4.2 #&#
%
\begin{Prop}[(``Safe-haven'' property of the w.f.-process)]\label{propboundM}
For all $L\in\N_0$,
\[
\p \Bigl[ \lim_{n\to\infty}W_n=0 \big|
W_0=L \Bigr]\le q_W^L.
\]
\end{Prop}

\begin{pf}See Section~\ref{prmarkov}.
\end{pf}

Hence, if a society fears extinction it may change to become
a w.f.-society and likely survive unless $q_W=1$, or $L$ is small. Also,
as we shall see later on, $q_W=1$ implies $q_\Gamma=1$ for any RDBP
$(\Gamma_n)_n$, so that in that case no change in policy could avoid
extinction. The w.f.-society may be seen as the ``safe-haven'' society
form with respect to Hypothesis~\ref{hyp1}.

%s4.2 #&#
\subsection{Uniform upper-bound process}\label{sbounds}
It turns out that the w.f.-process is always an upper bound for any other
RDBP, and this in the strongest sense:

%pr4.3 #&#
%
\begin{Prop}[(Uniform upper bound)]\label{propbounds}
Let $(\Gamma_n)_n$ be any RDBP, and let $(W_n)_n$ be the w.f.-process
defined on the same double arrays. Then, for all $n$, we have $\Gamma
_n\le W_n$ a.s. In particular, $q_W\le q_\Gamma$.
\end{Prop}

\begin{pf}See Section~\ref{prbounds}. (The same result remains true
in the multi-parameter case.)
\end{pf}

%s4.2.1 #&#
\subsubsection{Nonexistence of a uniform lower-bound process}\label{chapnolow}
We now turn to a comparison between $(\Gamma_n)_n$ and the
corresponding s.f.-process $(S_n)_n$. This is a more subtle problem.
Indeed, it is in general not true that $S_n\le\Gamma_n$ a.s. for all $n$.

This may come somewhat as a surprise.
Indeed, since the s.f.-society is clearly the most restrictive one for
the number of offspring which can stay, one feels that $(\Gamma_n)_n$
should always do at least as well as the process $(S_n)_n$ governed by
the s.f.-policy.
An explicit counterexample is given in Section~\ref{prbounds}: it is
based on the fact that $M(t,(x_k)_{k=1}^t, s)$ is, for fixed $s$,
increasing in $t$ up to some threshold $t_s$ but decreasing for $t\ge
t_s$. However we can explain here already what is behind it.

Suppose $(S_n)_n$ and $(\Gamma_n)_n$ have the same number $k$ of
individuals at time $n$. Then it follows from the counting function
comparison that $\Gamma_{n+1}$ is at least as large as $S_{n+1}$. Hence
we expect on average more offspring from $\Gamma_{n+1}$ than from
$S_{n+1}$. But then the extreme claims of the offspring of $\Gamma
_{n+1}$ must be expected to be larger than those from the offspring of
$S_{n+1}$. If the policy of $(\Gamma_n)_n $ serves just one of the
larger claims, the inequality established in generation $n+1$ may point
to the opposite direction in generation $n+2$.

Therefore, we see that no nontrivial uniform lower bound can exist for
general RDBPs, and thus all attempts to compare general trajectories
would be fruitless. We found it highly interesting that, nevertheless,
we can prove the Envelopment theorem presented in Section~\ref
{chapenvelop} (see Theorem~\ref{thbounds}). This will justify the
fact that we can essentially restrict our attention to the w.f.-policy
and the s.f.-policy, which we will study in the next sections. The
f.c.f.s.-policy will be considered as a point of comparison later on (see
Section~\ref{chapf.c.f.s.}).

%s4.3 #&#
\subsection{Extinction criterion for the w.f.-process}\label{chapw.f.}
%
%th4.4 #&#
%
\begin{theor}\label{th1}
Let $(W_n)_n$ be the w.f.-process on $(D_n^k,X_n^k,R_n^k)_{n,k}$.

\begin{longlist}[(a)]
\item[(a)] If $r\le m\mu$ and if $\tau$ is the solution of
%
%e7 #&#
%
\begin{equation}
\label{tauequ} \int_0^\tau x \,\mathrm{d}F(x)=
\frac{r}m,
\end{equation}
then:

\begin{enumerate}[(ii)]
\item[(i)] if $mF(\tau)<1$, then $q_W=1$;
\item[(ii)] if $mF(\tau)>1$, then $q_W<1$.
\end{enumerate}

\item[(b)] If $r>m\mu$, then $q_W<1$.
\end{longlist}
Moreover, in cases \textup{(a)(ii)} and \textup{(b)}, we even have
%
%e8 #&#
%
\begin{equation}
\label{eqsupbornew} \p \Bigl[ \lim_{n\to\infty}W_n=0
\big| W_0=L \Bigr]\xrightarrow{L\to\infty}0.
\end{equation}
Further, if there is no extinction, the process explodes a.s. and
behaves more and more like a supercritical GWP with a new reproduction
mean $\tilde m(>1)$, say, defined by
\[
\tilde m= %
\cases{ m, &\quad if $r\ge m\mu$,
\cr
m F(\tau),&\quad if
$r<m\mu$ and $m F(\tau)>1$.} %
\]
\end{theor}

\begin{pf}See Section~\ref{prth1}. Equation~(\ref{eqsupbornew})
follows from Proposition~\ref{propboundM}.
\end{pf}

The following remarks will provide a better understanding of these results.

%re4.5 #&#
%
\begin{rems}
(i) The case {(b)} is the most intuitive one. Indeed, the condition
$r>m\mu$ means that a typical ancestor creates in expectation more
resources than his offspring will claim together. Consequently, when
the population grows the law of large numbers ensures that the process
will behave more and more like a supercritical GWP, the asymptotic
properties of which are well understood [see, e.g., \citet{Bingham74}].
For this argument to hold, the regularity assumption~(iii) (see
Section~\ref{chapnatregcond}) is needed to ensure that the process can
reach any finite size with positive probability; this condition becomes
redundant if we replace the initial setting $W_0=1$ by $W_0=w$ for $w$
sufficiently large.

(ii) Theorem~\ref{th1} is sharp in the sense that $mF(\tau)=1$ is the
exact separation point between a.s. extinction and positive survival
probability. However, unlike what occurs with GWPs, it is here not
immediate to see under which conditions on the law $(p_k)_k$ and on $F$
the critical case implies a.s. extinction. Note that, for fixed $m$ and
$F$, the parameter $\tau=\tau(r/m)$ is increasing in $r$, so that the
equation $mF(\tau)=1$ defines a critical mean resource production
$r_{W,c}$ below which $q_W=1$ and above which $q_W<1$.
\end{rems}

Note that the survival conditions depend deeply on the
distribution $F$ of the claims. The following special cases give
criteria in terms of the first two moments only. From the point of view
of applications, this is more attractive since $F$ may not be known precisely.

%co4.6 #&#
%
\begin{cor}\label{cor1}
Let $(W_n)_n$ be the w.f.-process on $(D_n^k,X_n^k,R_n^k)_{n,k}$.

\begin{longlist}[(ii)]
\item If $\mu<r$, we have $q_W<1$.
\item Assume $r\le m\mu(1-\sqrt{1-1/m})$. If $\Var X<\frac{(m\mu
-r)^2}{m(m-1)}-\mu^2(>0)$, we have $q_W=1$.
\end{longlist}
\end{cor}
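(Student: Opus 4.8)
The plan is to deduce both claims from Theorem~\ref{th1} by pinning down on which side of $1$ the quantity $mF(\tau)$ lies in the regime $r\le m\mu$, using only the first two moments of the claims. For part~(i) a \emph{lower} bound on $F(\tau)$ is needed, for part~(ii) an \emph{upper} bound; in both cases the bound is extracted from the defining identity $\int_0^\tau x\,\mathrm{d}F(x)=r/m$ by elementary truncated-moment estimates.

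For part~(i): if $r>m\mu$ there is nothing to do, since Theorem~\ref{th1}(b) already gives $q_W<1$. Assume therefore $\mu<r\le m\mu$. Then $0<r/m\le\mu$, so $\tau$ is well defined (the map $t\mapsto\int_0^t x\,\mathrm{d}F(x)$ being continuous and nondecreasing from $0$ to $\mu$) and $F(\tau)>0$. Writing $\mu=\int_0^\tau x\,\mathrm{d}F(x)+\int_\tau^\infty x\,\mathrm{d}F(x)=\tfrac rm+\int_\tau^\infty x\,\mathrm{d}F(x)$ and using $\int_0^\tau x\,\mathrm{d}F(x)\le\tau F(\tau)$ together with $\int_\tau^\infty x\,\mathrm{d}F(x)\ge\tau\bigl(1-F(\tau)\bigr)$, one gets $\tfrac rm\le\tau F(\tau)$ and $\tau\bigl(1-F(\tau)\bigr)\le\mu-\tfrac rm$; multiplying the first by $1-F(\tau)$ and the second by $F(\tau)$ and eliminating the common term $\tau F(\tau)(1-F(\tau))$ gives the clean inequality $\tfrac rm\le\mu F(\tau)$, i.e. $F(\tau)\ge \tfrac{r}{m\mu}$. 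Since $r>\mu$ this means $mF(\tau)>1$, so Theorem~\ref{th1}(a)(ii) yields $q_W<1$.

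For part~(ii): the hypothesis $r\le m\mu\bigl(1-\sqrt{1-1/m}\bigr)$ in particular forces $r\le m\mu$, so we are again in case~(a) with $\tau$ well defined, and now $F(\tau)$ must be bounded from above. Since $F$ is continuous and $X\ge0$ we have $\E\bigl[X\mathds{1}_{X>\tau}\bigr]=\mu-\tfrac rm$, and Cauchy--Schwarz applied to $X\mathds{1}_{X>\tau}=\bigl(X\mathds{1}_{X>\tau}\bigr)\cdot\mathds{1}_{X>\tau}$ (then bounding $\E\bigl[X^2\mathds{1}_{X>\tau}\bigr]\le\E[X^2]=\mu^2+\Var X$) gives $\bigl(\mu-\tfrac rm\bigr)^2\le(\mu^2+\Var X)\bigl(1-F(\tau)\bigr)$, hence
\[F(\tau)\;\le\;1-\frac{(m\mu-r)^2}{m^2(\mu^2+\Var X)}.\]
A one-line manipulation shows the right-hand side is $<1/m$ exactly when $\Var X<\tfrac{(m\mu-r)^2}{m(m-1)}-\mu^2$, and that this upper bound for $\Var X$ is strictly positive exactly when $r<m\mu\bigl(1-\sqrt{1-1/m}\bigr)$; under the stated hypotheses we thus get $mF(\tau)<1$, and Theorem~\ref{th1}(a)(i) gives $q_W=1$.

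I do not expect a genuine obstacle here: once the problem is reduced to estimating $F(\tau)$, part~(i) is just a rearrangement of two truncated first-moment bounds and part~(ii) is essentially Cauchy--Schwarz (equivalently a Cantelli-type bound), the constants having been chosen so that the arithmetic matches the statement on the nose. The only points needing a word of care are the well-definedness of $\tau$, the trivial degenerate cases ($r=0$, or $F(\tau)\in\{0,1\}$), and checking the inequality directions so that the non-strict ``$\le$'' in the hypothesis of~(ii) leaves nothing to prove in the boundary case where the variance bound degenerates to $0$.
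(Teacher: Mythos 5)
Your proposal is correct and follows essentially the same route as the paper: both parts are reduced to locating $mF(\tau)$ relative to $1$ via Theorem~\ref{th1}, with part~(ii) proved by exactly the paper's Cauchy--Schwarz bound on $\int_\tau^b x\,\mathrm{d}F(x)$, and your part~(i) is just an explicit elementary derivation of the inequality $\E[X\,|\,X\le\tau]\le\mu$ (equivalently $r/m\le\mu F(\tau)$) that the paper's Lemma~\ref{lem:cor1} invokes directly. No gaps; your remarks on well-definedness of $\tau$ and the vacuous boundary case of~(ii) are consistent with the paper's statement.
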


\begin{pf}See Section~\ref{prcors}.
\end{pf}

%s4.4 #&#
\subsection{Extinction criterion for the s.f.-process}\label{chapsf}
We now present the extinction criterion for the s.f.-process. Since we
deal here again with a process depending on the partial sum behavior of
order statistics---now on the sum of the largest ones---we expect
analogies. To facilitate a comparison between the s.f.-process and the
w.f.-process we had made the assumption [recall~(v) in Section~\ref
{chapnatregcond}] that resource claims are bounded above.

However, many important difficulties will arise, and the comparison
with the w.f.-process will only be possible for a very small part of the
proof. In particular, we will need here the boundedness of all the
random variables $D_n^k$, $X_n^k$ and $R_n^k$ [see~(v) in Section~\ref
{chapnatregcond}].

%th4.7 #&#
%
\begin{theor}\label{th2}
Let $(S_n)_n$ be the s.f.-process on $(D_n^k,X_n^k,R_n^k)_{n,k}$.

\begin{longlist}[(a)]
\item[(a)] If $r\le m\mu$ and if $\theta$ is the solution of
%
%e9 #&#
%
\begin{equation}
\label{thetaequ} \int_\theta^bx \,\mathrm{d}F(x)=
\frac{r}m,
\end{equation}
then:

\begin{enumerate}[(ii)]
\item[(i)] if $m(1-F(\theta))<1$, then $q_S=1$;
\item[(ii)] if $m(1-F(\theta))>1$, then $q_S<1$.
\end{enumerate}
\item[(b)] If $r>m\mu$, then $q_S<1$.\vadjust{\goodbreak}
\end{longlist}
Moreover, in cases \textup{(a)(ii)} and \textup{(b)}, we even have
%
%e10 #&#
%
\begin{equation}
\label{eqsupbornes} \p \Bigl[ \lim_{n\to\infty}S_n=0
\big| S_0=L \Bigr]\xrightarrow {L\to\infty}0.
\end{equation}
Further, if there is no extinction, the process explodes a.s. and
behaves more and more like a supercritical GWP with a new reproduction
mean $\tilde m(>1)$, say, defined by
\[
\tilde m=\cases{ m, &\quad if $r\ge m\mu$,
\cr
m \bigl(1-F(\theta)\bigr), &\quad
if $r<m\mu$ and $m \bigl(1-F(\theta)\bigr)>1$.} %
\]
\end{theor}

\begin{pf}See Sections~\ref{proofth2} and~\ref{proofth2+}.
\end{pf}

%
%re4.8 #&#
%
\begin{rem}
Equation~(\ref{eqsupbornes}) rejoins Proposition~\ref{propboundM} in
a weaker sense. The critical case is now determined by the equation
$m(1-F(\theta))=1$. Note that, for fixed $m$, the parameter $\theta
=\theta(r/m)$ is decreasing in $r$, in the same way that, in
Theorem~(\ref{th1}), $\tau(r/m)$ was increasing in $r$. The equation
$m(1-F(\theta))=1$ thus defines the critical mean resource production
$r_{S,c}$.
\end{rem}

Note that the survival conditions depend deeply on the distribution of
the claims and are thus quite difficult to interpret in practice. The
following special cases, expressed in terms of the two first moments of
the claims only, are more easy to interpret:

%co4.9 #&#
%
\begin{cor}\label{cor2}
Let $(S_n)_n$ be the s.f.-process on $(D_n^k,X_n^k,R_n^k)_{n,k}$.

\begin{longlist}[(ii)]
\item If $r<\mu$, we have $q_S=1$.
\item Assume $r\ge\mu\sqrt{m}$. If $\Var X<{r^2}/m-\mu^2(>0)$, then we
have $q_S<1$.
\end{longlist}
\end{cor}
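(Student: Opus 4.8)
The plan is to derive both parts directly from Theorem~\ref{th2}, by bounding the tail weight $1-F(\theta)$ in terms of the first two moments of the claims, where $\theta$ is the solution of~\eqref{thetaequ}. Recall that $\theta$ is well defined (with $\theta\in[0,b]$) precisely when $r\le m\mu$, i.e. in case~(a) of Theorem~\ref{th2}; moreover $F(\theta)>0$ whenever $r<m\mu$, since $F(\theta)=0$ would force $\mu=\int_0^bx\,\mathrm{d}F(x)=\int_\theta^bx\,\mathrm{d}F(x)=r/m$, a contradiction, and the same computation also gives $\theta>0$ in that case.

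For part~(i), suppose $r<\mu$. Since $m>1$ this gives $r<m\mu$, so we are in case~(a), and by Theorem~\ref{th2}(a)(i) it suffices to show $m(1-F(\theta))<1$. I would split the mean at the level $\theta$,
\[\mu=\int_0^\theta x\,\mathrm{d}F(x)+\int_\theta^b x\,\mathrm{d}F(x)=\int_0^\theta x\,\mathrm{d}F(x)+\frac rm,\]
and use the two elementary truncation bounds $\int_0^\theta x\,\mathrm{d}F(x)\le\theta F(\theta)$ and $\int_\theta^b x\,\mathrm{d}F(x)\ge\theta(1-F(\theta))$. These give $\theta F(\theta)\ge\mu-r/m>0$ and $\theta(1-F(\theta))\le r/m$; dividing the second inequality by the first (legitimate since $\theta F(\theta)>0$ and $\mu-r/m>0$) and rearranging yields $(1-F(\theta))\,\mu\le r/m$, i.e. $m(1-F(\theta))\le r/\mu<1$, as required.

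For part~(ii), note first that the hypothesis rewrites as $\E(X^2)=\Var X+\mu^2<r^2/m$, and that assuming $r\ge\mu\sqrt m$ merely records that $r^2/m-\mu^2\ge0$. If $r>m\mu$, then case~(b) of Theorem~\ref{th2} gives $q_S<1$ immediately. If $r\le m\mu$, we are in case~(a) and I would apply the Cauchy--Schwarz inequality to the measure $\mathrm{d}F$ restricted to $[\theta,b]$:
\[\Big(\frac rm\Big)^2=\Big(\int_\theta^b x\,\mathrm{d}F(x)\Big)^2\le(1-F(\theta))\int_\theta^b x^2\,\mathrm{d}F(x)\le(1-F(\theta))\,\E(X^2)<(1-F(\theta))\,\frac{r^2}m,\]
whence $1-F(\theta)>1/m$, and Theorem~\ref{th2}(a)(ii) gives $q_S<1$.

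Both parts are thus short reductions to Theorem~\ref{th2}, the only genuine inputs being that theorem together with the Cauchy--Schwarz inequality and the first-moment truncation bounds $\int_0^\theta x\,\mathrm{d}F(x)\le\theta F(\theta)$ and $\int_\theta^b x\,\mathrm{d}F(x)\ge\theta(1-F(\theta))$. Accordingly I do not expect any real obstacle; the only points that need a little care are checking that $\theta$ is well defined in case~(a) and disposing of the degenerate situations ($F(\theta)\in\{0,1\}$, $\theta\in\{0,b\}$, $r=0$), all of which are either ruled out by the standing hypotheses of the respective parts or render the conclusion trivial.
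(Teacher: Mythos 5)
Your proposal is correct and takes essentially the same route as the paper: both parts are reduced to Theorem~\ref{th2} by estimating $1-F(\theta)$ from the first two moments, and your part (ii) is exactly the paper's Cauchy--Schwarz argument on $\int_\theta^b x\,\mathrm{d}F(x)=r/m$. The only (harmless) difference is in part (i), where the paper obtains $m(1-F(\theta))<1$ via the one-line observation $\E[X\mid X>\theta]\ge\E[X]=\mu$ (its Lemma~\ref{lem:cor2}), while your truncation bounds $\int_0^\theta x\,\mathrm{d}F(x)\le\theta F(\theta)$ and $\int_\theta^b x\,\mathrm{d}F(x)\ge\theta(1-F(\theta))$ give the same conclusion, in fact in the slightly sharper quantitative form $m(1-F(\theta))\le r/\mu$, and you are also a bit more careful than the paper in splitting off the case $r>m\mu$ in part (ii).
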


\begin{pf}See Section~\ref{prcors}.
\end{pf}

%s4.5 #&#
\subsection{Extinction criterion for the f.c.f.s.-process}\label{chapf.c.f.s.}
As a term of comparison, it is interesting to observe what happens in
the case of a f.c.f.s.-process.

%pr4.10 #&#
%
\begin{Prop}\label{prop3}
Let $(U_n)_n$ be the f.c.f.s.-process on $(D_n^k,X_n^k,R_n^k)_{n,k}$.

\begin{longlist}[(a)]
\item[(a)] If $r<\mu$, then $q_U=1$.
\item[(b)] If $r>\mu$, then $q_U<1$.
\end{longlist}
Moreover, in case \textup{(b)}, we even have
%
%e11 #&#
%
\begin{equation}
\label{eqsupborneu} \p \Bigl[ \lim_{n\to\infty}U_n=0
\big| U_0=L \Bigr]\xrightarrow {L\to\infty}0.
\end{equation}
Further, if there is no extinction, the process explodes a.s. and
behaves more and more like a supercritical GWP with reproduction mean $m$.
\end{Prop}

%re4.11 #&#
%
\begin{rem} As in the case of the w.f.-process, the regularity assumption~(v) (see Section~\ref{chapnatregcond}) is not needed in the proof of
the above result. The critical mean resource production is now simply
defined by $r_{U,c}=\mu$.
\end{rem}

%s4.6 #&#
\subsection{Envelopment theorems}\label{chapenvelop}
As explained in Section~\ref{sbounds}, although the \mbox{w.f.-}process
constitutes a uniform upper bound process, no nontrivial uniform lower
bound process can possibly exist for general RDBPs. In this section we
shall see that, however, the s.f.-process constitutes a lower bound
process in a sense that is strong enough to call it an envelopment from
below. Firstly, conditioned on survival, $(S_n)_n$ has the lowest
limiting growth rate of all RDBPs. Secondly, if an arbitrary RDBP
$(\Gamma_n)_n$ cannot survive, the s.f.-process $(S_n)_n$ cannot
survive either.

%s4.6.1 #&#
\subsubsection{Conditional envelopment theorem}
Let us first consider a general RDBP $(\Gamma_n)_n$. Since $0$ is an
absorbing state, we define $\Gamma_{n+1}/\Gamma_n=0$ if $\Gamma_n=0$.
If $\Gamma_{n+1}>0$ we may see $\Gamma_{n+1}/\Gamma_n$ as the empirical
growth rate in period $n$. We know that for some societies the
empirical growth rates will converge a.s. to a limit in time, as, for
instance, for the w.f.-process, the s.f.-process, the f.c.f.s.-process, and
others. But then, given our very general definition of a policy $\pi$,
it is also clear that there are many processes for which the empirical
growth rates do not converge; it suffices to think, for example, of
societies which apply very different rules according to the number of
claims being even or odd.

The following result shows that, conditioned on survival, the growth
rates of any RDBP will finally be between the growth rates of the
w.f.-process and the s.f.-process.

%pr4.12 #&#
%
\begin{Prop}\label{propenvelop1}
Let $(\Gamma_n)_n$ be any RDBP on $(D_n^k,X_n^k,R_n^k)_{n,k}$. Let
\[
\underline\gamma=\liminf_{n\to\infty}\frac{\Gamma_{n+1}}{\Gamma
_n},\qquad
\bar\gamma=\limsup_{n\to\infty}\frac{\Gamma
_{n+1}}{\Gamma_n}.
\]
Given that $\Gamma_{n}\to\infty$ (i.e., $\underline\gamma>0$), we have
\[
m\bigl(1-F(\theta)\bigr)\le\underline\gamma\le\bar\gamma\le mF(\tau),
\]
where $\tau$ and $\theta$ are defined as in Theorems~\ref{th1}
and~\ref{th2}.
\end{Prop}

\begin{pf}
See Section~\ref{prenvelop1}.
\end{pf}

Hence, there may be no limiting growth rate of a RDBP, but the $\liminf
$ and the $\limsup$ of empirical growth rates are, conditioned on
survival, bound\-ed by the limit growths rates of the w.f.-process and the
s.f.-process. This can be seen as a conditional
envelopment result with the w.f.- and the s.f.-policies as extreme
policies. If the $\liminf$ and $\limsup$ coincide, we can call the
limit $\gamma$ (without much abuse of terminology) the \textit{``Malthusian''} growth rate.

%s4.6.2 #&#
\subsubsection{Unconditional envelopment theorem}
We shall prove a stronger unconditional result: if there is a positive
survival probability for the process $(\Gamma_n)_n$, then, given that
the size of the process $(\Gamma_n)_n$ is sufficiently large, the
growth rate of that process dominates, with overwhelming probability,
that of the corresponding s.f.-society at all times $n\ge n_0$. This is
essentially the statement of Proposition~\ref{propprepborn} in
Section~\ref{chapboundproof}, and this allows us to deduce the
following envelopment theorem.

A few definitions are needed:
for any $L\in\N_0$, let $(S_n(L))_n$, $(\Gamma_n(L))_n$ and
$(W_n(L))_n$ denote, respectively, the s.f.-process, an arbitrary RDBP,
and the w.f.-process, each starting with initial size $L$. Hence,
$S_n=S_n(1)$, $\Gamma_n=\Gamma_n(1)$ and $W_n=W_n(1)$. Also let
$\theta
$ be defined as in Theorem~\ref{th2}.

%th4.13 #&#
%
\begin{theor}[(Envelopment theorem)]\label{thbounds}
Assume that $m(1-F(\theta))\ne1$ if $r\le m\mu$. Then,
\[
\p \Bigl[\lim_{n\to\infty}S_n(L)\le\lim
_{n\to\infty}\Gamma _n(L)\le\lim_{n\to\infty}W_n(L)
\Bigr]\xrightarrow{L\to\infty}1.
\]
Moreover, $q_{W}=1\Rightarrow q_\Gamma=1\Rightarrow q_S=1$.
\end{theor}

\begin{pf}See Section~\ref{chapboundproof}. (The same result holds
in the multiparameter case.)
\end{pf}

Proposition~\ref{propprepborn} in Section~\ref{chapboundproof} will
give more precise information about the lower bound. Such bounds are of
considerable theoretical interest, and, as we shall now see, they are
also serving as useful directives for individuals who have decided to
adapt a specific type of society. Indeed, if the probability laws of
the random variables $(D_n^k)_{n,k}$, $(X_n^k)_{n,k}$ and
$(R_n^k)_{n,k}$ are fixed up to their mean $m$, $\mu$ and $r$,
respectively, then it is in practice interesting to determine the \textit{critical mean resource production} $r_{\Gamma,c}(m,\mu)$, say, relative
to the RDBP $(\Gamma_n)_n$, that is, the value such that
\[
q_\Gamma=1\qquad\mbox{if } r<r_{\Gamma,c}(m,\mu)\quad\mbox {and}
\quad q_\Gamma<1\qquad\mbox{if } r>r_{\Gamma,c}(m,\mu).
\]
By Theorem~\ref{thbounds}, the following can be deduced:

%co4.14 #&#
%
\begin{cor}[(Critical curves for survival)]
For all $m,\mu$, we have
\[
r_{W,c}(m,\mu)\le r_{\Gamma,c}(m,\mu)\le r_{S,c}(m,\mu).
\]
\end{cor}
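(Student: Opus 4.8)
The plan is to deduce the Corollary directly from the Envelopment Theorem (Theorem~\ref{th:bounds}), so the only real work is to translate the almost-sure ordering of limits into an ordering of the critical thresholds. Fix $m$ and $\mu$. First I would record that, by Theorems~\ref{th1} and~\ref{th2} and Proposition~\ref{prop3}, the critical curves $r_{W,c}(m,\mu)$, $r_{S,c}(m,\mu)$ (and $r_{U,c}(m,\mu)=\mu$) are all well defined: each of the enveloping processes exhibits a genuine phase transition at a single value of $r$, with a.s.\ extinction strictly below and positive survival probability strictly above. The quantity $r_{\Gamma,c}(m,\mu)$ is defined by the two displayed implications in the statement; I would note it is well defined whenever the RDBP $(\Gamma_n)_n$ itself has such a sharp transition, which is the standing assumption behind writing ``$r_{\Gamma,c}(m,\mu)$''.

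The core argument is a monotonicity-in-$r$ comparison combined with the Envelopment Theorem. Suppose, for contradiction, that $r_{\Gamma,c}(m,\mu) < r_{W,c}(m,\mu)$. Then one can pick $r$ with $r_{\Gamma,c}(m,\mu) < r < r_{W,c}(m,\mu)$; for such $r$ we have $q_\Gamma < 1$ (the process $(\Gamma_n)_n$ can survive) while $q_W = 1$ by definition of $r_{W,c}$. But Theorem~\ref{th:bounds} asserts the implication $q_W = 1 \Rightarrow q_\Gamma = 1$, a contradiction. Hence $r_{W,c}(m,\mu) \le r_{\Gamma,c}(m,\mu)$. The upper inequality is entirely symmetric: if $r_{\Gamma,c}(m,\mu) > r_{S,c}(m,\mu)$, choose $r$ strictly between them, so that $q_\Gamma = 1$ while $q_S < 1$, contradicting the implication $q_\Gamma = 1 \Rightarrow q_S = 1$ of Theorem~\ref{th:bounds}. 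This yields $r_{\Gamma,c}(m,\mu) \le r_{S,c}(m,\mu)$, and the two bounds together give the claim.

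One technical point I would address carefully is the degenerate case excluded in Theorem~\ref{th:bounds}, namely $m(1-F(\theta))=1$ when $r\le m\mu$, i.e.\ the critical case for the sf-process. In that borderline situation the second conclusion of Theorem~\ref{th:bounds} (the chain of implications among the extinction probabilities) is exactly the value of $r$ equal to $r_{S,c}(m,\mu)$, which is a single point and does not affect the (closed) inequality $r_{\Gamma,c}(m,\mu)\le r_{S,c}(m,\mu)$: away from that point the strict comparisons used above apply, and taking the supremum/infimum over admissible $r$ recovers the inequality at the endpoint by continuity of the thresholds in $r$. I would phrase the proof so that only values of $r$ away from the exceptional point are ever used, and then pass to the limit.

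The main obstacle, such as it is, is conceptual rather than computational: one must make sure that $r_{\Gamma,c}(m,\mu)$ is a meaningful object, i.e.\ that the survival indicator of a general RDBP is genuinely monotone in $r$ (with the other laws fixed up to their means) so that a single critical value exists. This monotonicity is intuitively clear --- more resources can only help survival, and it can be made precise via the coupling underlying Proposition~\ref{prop:bounds} together with Lemma~\ref{lem:N} --- but it is worth stating explicitly, since the Corollary is vacuous for processes without a sharp threshold. Once this is in place, the derivation from Theorem~\ref{th:bounds} is immediate, as sketched above.
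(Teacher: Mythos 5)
Your argument is correct and is essentially the paper's own: the paper deduces the corollary directly from the implication chain $q_W=1\Rightarrow q_\Gamma=1\Rightarrow q_S=1$ of Theorem~\ref{th:bounds}, which is precisely the contradiction argument you spell out, and your extra remarks on the well-definedness of $r_{\Gamma,c}$ and on avoiding the excluded critical value $m(1-F(\theta))=1$ are sensible refinements rather than a different route.
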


Therefore, the study of the two extreme RDBPs gives highly
relevant information about general RDBPs, without having to understand
every single possible policy (see examples in Section~\ref{exs}).
As we have seen, the computation of the critical mean resource
production even shows more. The point is that the mean claim value
plays only one part but that the resource claim distribution function
$F$ (which determines the mean, of course) plays itself an important
part. Hence society may try to take influence on individuals to settle,
under a fixed mean claim $\mu$, for a distribution $F$ which favors survival.\looseness=-1

%re4.15 #&#
%
\begin{rem}
If $D$, $X$ and $R$ were not assumed to be independent, Theorem~\ref
{thbounds} would in general not remain true: the w.f.-policy and the
s.f.-policy would {a priori} not remain extreme policies. We could
then naturally wonder how different dependence patterns yield different
extreme policies. Such questions may attract interest for further studies.
\end{rem}

%s5 #&#
\section{Examples}\label{exs}
We now give examples. It will be interesting to notice that the
critical mean resource production for a w.f.-process turns out to be
lower than one would intuitively expect.

\begin{longlist}[iii)]
\item[(i)] Let $F$ be the uniform distribution function on $(0,d)$,
say. Then $\mu=d/2$. As in Theorems \ref{th1}(a) and \ref{th2}(a), let
$r<m\mu=md/2$ and suppose that $F(r/k)>0$ for some $k\ge2$ with
$p_k>0$.

First, focus on the corresponding w.f.-process. The value of $\tau$ [see
equation~(\ref{tauequ})] is thus determined by:
\[
\frac{r}m=\int_0^\tau x\, \mathrm{d}F(x)=
\int_0^\tau\frac{1}{d}x\,\mathrm {d}x=
\frac{\tau^2}{2d},
\]
which yields $\tau=\sqrt{2dr/m}$. Therefore, $F(\tau)=\sqrt{2r/(md)}$.
The critical mean resource production $r_{w,c}$ is thus determined by
\[
mF(\tau)=1\quad\Longleftrightarrow\quad\sqrt{2r_{w,c}m/d}=1
\]
which implies $r_{w,c}=d/2m=\mu/m$. Note that, $r_{w,c}=d/2m$, which is
for larger $m$ not far from the expected value of the smallest order
statistic of claims of $2m$ descendants. With such a low creation of
resources, the f.c.f.s.-process or s.f.-process would die out very quickly,
as we shall see now.

For the s.f.-process we need $\theta$ defined by [see equation~(\ref{thetaequ})]:
\[
\frac{r}m=\int_\theta^d
\frac{1}{d}x\,\mathrm{d}x=\frac{d^2-\theta^2}{2d},
\]
and straightforward calculations yield $r_{s,c}=d(1-1/2m)=\mu(2-1/m)$.

We note that the critical mean resource production is now $2m-1$ times
higher than for the corresponding w.f.-process. Hence, if individuals
living in the \mbox{w.f.-}society on the critical value of creation and want to
change to the s.f.-society, then they must increase their average
resource creation by a factor $2m-1$ to be able to survive in the long
run, that is, an enormous difference. For instance, if $m=3$, the
critical resource creation mean must increase by factor five to
maintain a chance of survival!
Comparing with the corresponding critical mean resource production for
a f.c.f.s.-process, $r_{u,c}=\mu$, gives
\[
r_{u,c}-r_{w,c}=\mu(1-1/m)=r_{s,c}-r_{u,c}.
\]
Figure~\ref{graph2} compares the behavior of $r_{w,c}$ and $r_{s,c}$ as
functions of $m$. The area between the two curves corresponds to a
\textit{control area}, where the population can survive or get extinct
depending on the policy.
%
%f1 #&#
%
\begin{figure}%[!htp]

\includegraphics{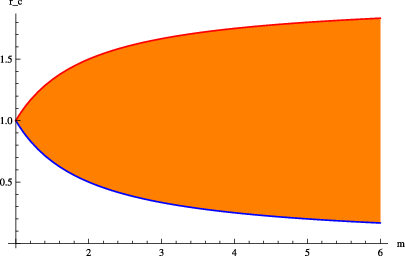}

\caption{For $d=2$ $(\mu=1)$, the critical mean resource
productions $r_{w,c}$ and $r_{s,c}$ are plotted (in blue and in red,
resp.), as functions of $m$.}\vspace*{-2pt}\label{graph2}
\end{figure}
\item[(ii)] Of course, we realize that the uniform distribution pushes
the largest and the smallest order statistics far apart. Therefore, it
is informative to look also at a case when the resource claim
distribution is more concentrated around its mean, as, for instance, in
the case of a beta distribution on $(0,1)$, with parameters $a$ and
$b$, say.
The distribution function is then defined on $(0,1)$ by the regularized
incomplete beta function: $F(x)=I_{a,b}(x)$. The mean resource claim is
given by $\mu=\frac{a}{a+b}$. As in Theorems \ref{th1}(a) and \ref{th2}(a), let $r<m\mu=\frac{am}{a+b}$ and suppose that $F(r/k)>0$ for
some $k\ge2$ with $p_k>0$.

First, focus on the corresponding w.f.-process. The value of $\tau$ [see
equation~(\ref{tauequ})] is determined by
\begin{eqnarray*}
\frac{r}m &=& \int_0^\tau x\,\mathrm{d}F(x)=
\int_0^\tau\frac
{x^a(1-x)^{b-1}}{B(a,b)}=
\frac{B(a+1,b)}{B(a,b)}I_{a+1,b}(\tau)
\\
&=& \frac{a}{a+b}I_{a+1,b}(
\tau),
\end{eqnarray*}
which yields $\tau=I^{-1}_{a+1,b} (\frac{r}m\frac{a+b}a )$. The
critical mean resource production $r_{w,c}$ is thus defined by
\[
mF(\tau)=1\quad\Longleftrightarrow\quad m I_{a,b} \biggl(I^{-1}_{a+1,b}
\biggl(r_{w,c}\frac{a+b}{am} \biggr) \biggr)=1,
\]
which implies\vspace*{-2pt}
\[
r_{w,c}=\frac{am}{a+b}I_{a+1,b}\bigl(I_{a,b}^{-1}(1/m)
\bigr).
\]

Now look at the corresponding s.f.-process. The value of $\theta$ [see
equation~(\ref{thetaequ})] is determined by
\[
\frac{r}m=\int_\theta^1 x\,\mathrm{d}F(x)=
\frac
{a}{a+b}\bigl(1-I_{a+1,b}(\theta)\bigr),
\]
which yields $\theta=I_{a+1,b}^{-1} (1-\frac{r}m\frac{a+b}a )$.
Straightforward calculations then give the critical mean resource
production $r_{s,c}=\frac
{am}{a+b}(1-I_{a+1,b}(I_{a,b}^{-1}(1-1/m)))$.

%
%f2 #&#
%
\begin{figure}[b]

\includegraphics{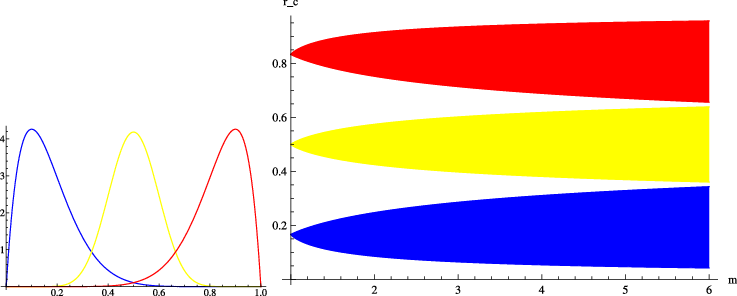}

\caption{The critical mean resource productions
$r_{w,c}$ and $r_{s,c}$ are plotted as functions of $m$, in the case of
a $B(a,b)$ resource claim distribution, for typical values of $(a,b)$:
$(2,10)$ in blue, $(14,14)$ in yellow and $(10,2)$ in red.}
\label{graph3}
\end{figure}

Observing that $I_{\alpha,\beta}(1-x)=1-I_{\beta,\alpha}(x)$, we deduce
that $I_{\alpha,\beta}^{-1}(1-x)=1-I_{\beta,\alpha}^{-1}(x)$. The
formula for $r_{s,c}$ can thus be rewritten as
\[
r_{s,c}=\frac{am}{a+b} I_{b,a+1} \bigl(I_{b,a}^{-1}
(1/m ) \bigr).
\]
For a f.c.f.s.-process, the corresponding critical mean resource production
simply reads $r_{u,c}=\mu$. Further, for large $m$, we can use the
approximation
\[
I_{\alpha,\beta}(z)=\frac{z^\alpha}{B(\alpha,\beta)} \biggl(\frac{1}\alpha +
\frac{1-\beta}{\alpha+1}z+O\bigl(z^2\bigr) \biggr);
\]
see, for example, \citet{Pearson68}. Straightforward calculations then
give, at leading order,
\[
r_{w,c}=\frac{a}{a+1} \biggl(\frac{a}{m}B(a,b)
\biggr)^{1/a}+O\bigl(m^{-2/a}\bigr),
\]
and
\[
r_{s,c}=1+\frac{b}{b+1}\bigl(b-a(b+1)\bigr) \biggl(
\frac{b}mB(b,a) \biggr)^{1/b}+O\bigl(m^{-2/b}\bigr).
\]
Figure~\ref{graph3} shows the critical areas in some typical cases, as
the peak is centered, moved to the left or to the right. Figure~\ref
{graph4} shows, in the centered case, how the critical area narrows as
the dispersion around the peak diminishes.
%f3 #&#
%
\begin{figure}%[b]

\includegraphics{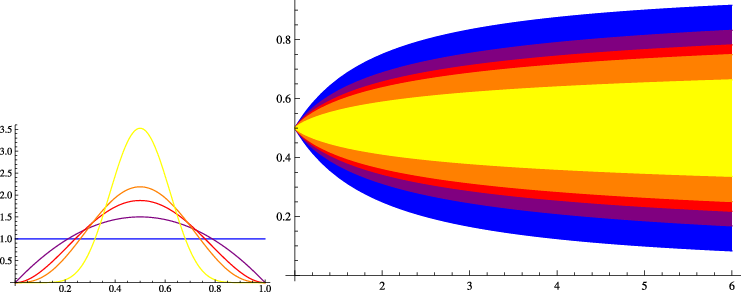}

\caption{The critical mean resource productions
$r_{w,c}$ and $r_{s,c}$ are plotted as functions of $m$, in the case of
a $B(a,b)$ resource claim distribution, for different symmetric values
of $(a,b)$: $(1,1)$ in blue, $(2,2)$ in pink, $(3,3)$ in red, $(4,4)$
in orange and $(10,10)$ in red.}\label{graph4}
\end{figure}

\item[(iii)] In the third example we choose a case where resource
claims are not bounded. Our results in the s.f.-case can therefore not be
used directly but it is interesting to see what happens to the
corresponding w.f.-process. Let $F$ be the distribution function of an
exponential random variable with parameter $\lambda$. The mean resource
claim is given by $\mu=1/\lambda$. As in Theorem \ref{th1}(a), let
$r<m\mu=m/\lambda$ and suppose that $F(r/k)>0$ for some $k\ge2$ with $p_k>0$.
The value of $\tau$ is determined by [see equation~(\ref{tauequ})]
\[
\frac{r}m=\int_0^\tau x\,\mathrm{d}F(x)=
\int_0^\tau\lambda xe^{-\lambda
x}\,\mathrm{d}x=
\frac{1}\lambda-e \biggl(\tau+\frac{1}\lambda
\biggr)e^{-\lambda
(\tau+1/\lambda)}
\]
which yields $\tau=-\frac{1}\lambda (1+\mathrm{W} [-\frac
\lambda
e (\frac{1}\lambda-\frac{r}m ) ] )$, where
$\mathrm
{W}[\cdot]$ denotes the Lambert W function; see, for example, \citet{Corless96}. The critical mean resource production $r_{w,c}$ is thus
determined by
\[
mF(\tau)=1\quad\Longleftrightarrow\quad m \biggl(1-\exp \biggl(\mathrm {W} \biggl[-
\frac\lambda e \biggl(\frac{1}\lambda-\frac{r_{w,c}}m \biggr)
\biggr]+1 \biggr) \biggr)=1.
\]
After simplifications, we get $r_{w,c}=\frac{1}\lambda
(1-(m-1)\log
{ (\frac{m}{m-1} )} )$, where we recall that $\frac{1}\lambda
=\mu$. For larger $m$, this becomes $r_{w,c}\approx1/(2\lambda m)=\mu
/(2m)$, that is, about one half of what is required for $U[0,1]$-claims.
\end{longlist}
%

%s6 #&#
\section{Proofs}\label{preuves}
%s6.1 #&#
\subsection{Preliminary results}\label{prmarkov}
We first prove Lemma~\ref{lemunimod}.

\begin{pf*}{Proof of Lemma~\ref{lemunimod}}
The property of $ M (t,(x_k)_{k=1}^t,s )$ being increasing in
$s$ is evident from the definition. To see unimodality in $t$, let
$I(t,s)=\mathbh1 \{\sum_{j=1}^tx_{j,t}\le s \}$. Then
(\ref{defM}) can be written as
\[
M \bigl(t,(x_k)_{k=1}^t,s \bigr)=I(t,s)t+
\bigl(1-I(t,s)\bigr) \sup \Biggl\{1\le k\le t\dvtx \sum_{j=t-k+1}^tx_{j,t}
\le s \Biggr\},
\]
where $\sup\varnothing:=0$.
This sum is clearly linearly increasing in $t$ as long as $I(t,s)=1$;
it is decreasing in $t$ as soon as $I(t,s)=0$, since the $k$ largest
order statistics are increasing with the sample size $t$. Hence, if
$t_1,t_2\in\N_0$, then, for fixed $s$, $M(t,(x_k)_{k=1}^t,s)$ is either
monotone increasing or monotone decreasing on $[t_1,t_2]$, or else
takes its maximum somewhere in $(t_1,t_2)$. This means that $M$ is
cap-unimodal in $t$, and hence the minimum
\[
\min_{t\in[t_1,t_2]} M \bigl(t,(x_k)_{k=1}^t,s
\bigr)
\]
is assumed in either $t_1$ or $t_2$.
Finally, the estimate for the corresponding maximum on $[t_1,t_2$] is
evident from the definition
of $M (t,(x_k)_{k=1}^t,s )$.
\end{pf*}

We now prove Proposition~\ref{markov}.

\begin{pf*}{Proof of Proposition~\ref{markov}}
Let $\pi$ be some policy (in the sense of Definition~\ref{defpol}).
Given $\Gamma_n$, the distributions of $D_n(\Gamma_n)$ and
$R_n(\Gamma
_n)$ are independent from $\Gamma_1,\ldots,\Gamma_{n-1}$, so that
\[
\Gamma_{n+1}=Q^\pi \bigl(D_n(
\Gamma_n),\bigl(X_n^k\bigr)_{k=1}^{D_n(\Gamma
_n)},R_n(
\Gamma_n) \bigr)
\]
is independent of $\Gamma_1,\ldots,\Gamma_{n-1}$, given $\Gamma_n$.
Thus, $(\Gamma_n)_n$ is a Markov process.
Now note that, since $Q^\pi(0,\varnothing,s)=0$ and $D_n(0)=0$ for all
$n\in\mathbb N$, we have $\{\Gamma_n=0\}\subset\{\Gamma_{n+1}=0\}$ so
that $0$ is an absorbing state for the process $(\Gamma_n)_n$.
Moreover, since
%
%e12 #&#
%
\begin{equation}
\Gamma_{n+1}=Q^\pi \bigl(D_n(
\Gamma_n),\bigl(X_n^k\bigr)_{k=1}^{D_n(\Gamma
)},R_n(
\Gamma_n) \bigr)\le D_n(\Gamma_n),
\end{equation}
it follows that
%
%e13 #&#
%
\begin{equation}
\p[\Gamma_{n+1}=0|\Gamma_n]\ge P\bigl[D_n(
\Gamma_n)=0|\Gamma _n\bigr]=p_0^{\Gamma_n},
\end{equation}
where the last equality holds because of the assumption of independent
reproduction. Therefore, the absorbing state $0$ is accessible from any
state $s\in\N$, with at least probability $p_0^s>0$. The state $0$ is
thus the only absorbing state, and, as $(\Gamma_n)_n$ is a Markov
process, we conclude
%
%e14 #&#
%
\begin{equation}
\p[0<\Gamma_n\le s\mbox{ i.o.}]=0\qquad\forall s\in
\N_0.
\end{equation}
The same arguments immediately adapt to multiparameter policies.
\end{pf*}

We now turn to the proof of Proposition~\ref{propboundM}. The idea is
that we compare the behavior of $(W_n)_n$ in each step $n$ with a
process consisting of $W_n$ i.i.d. versions of a weakest-first process
starting with one individual.

\begin{pf*}{Proof of Proposition~\ref{propboundM}}
Let $L\in\N_0$ and let $W_n^{(1)},\ldots,W_n^{(L)}$ be $L$ i.i.d.
copies of a weakest-first process. We then have the following
(super\-additivity-type) inequality, namely, for all $ n,  k \in\N_0$,
%
%e15 #&#
%
\begin{eqnarray}
\label{eqinegtriang}
&& \p[W_n\le k | W_0=L]
\nonumber\\[-8pt]\\[-8pt]
&&\qquad \le\p
\bigl[W_n^{(1)}+\cdots+W_n^{(L)}\le k
| W_0^{(1)}=\cdots=W_0^{(L)}=1\bigr],\nonumber
\end{eqnarray}
which we shall prove first.

To see this, we begin with the case $n=1$.

The case $L=1$ is trivial; hence suppose $L>1$. The LHS of~(\ref{eqinegtriang}) becomes, by an additional conditioning on $D_1(L)\sim D(L)$,
\begin{eqnarray*}
&& P[W_1\le k|W_0=L]
\\
&&\qquad =P\bigl[D(L)\le k\bigr]+P
\bigl[W_1\le k|D_1(L)> k,W_0=L\bigr] P
\bigl[D(L)> k\bigr],
\end{eqnarray*}
where we have used the facts that the distribution of $D(L)$ depends
only on $L$ (and not on the generation number), and also that $W_1$
cannot possibly exceed $D_1(W_0)=D_1(L)$.

Now suppose we do the same conditioning on the RHS of~(\ref{eqinegtriang}), that is, for the offspring of the $L$
partitioned processes. The first term $P[D(L)\le k]$ is then the same
on both sides, since, as before, reproduction of individuals is
independent. Hence, subtracting equal terms on both sides we can now
limit our interest to the corresponding second term with more than $k$
offspring.

The distributions of the total created resource space and of the claims
are by definition the same on both sides; therefore it suffices to look
for the moment at the influence of the order statistics of claims in a
\textit{fixed} sequence of claims on a \textit{fixed} resource space $R$, say.

In the LHS model of (\ref{eqinegtriang}), the resource space $R$ is
global (i.e., united) because all descendants from the different
families contribute to a common resource space. In the RHS model of
(\ref{eqinegtriang}), this resource space is, however, local (i.e.,
compartmented). On the LHS the count of individuals to stay is
therefore the count of the globally smallest order statistics of claims
which can be successively accommodated by $R$ whereas on the RHS the
count is on the locally smallest order statistics of claims. The
latter, put in increasing order, are a subsequence of the sequence of
claims in increasing order. Hence the RHS count cannot exceed the lhs count.

Passing from the counting argument to the corresponding probability
measures on both sides proves~(\ref{eqinegtriang}) for $n=1$, that is,
under the condition $W_0=L$ the number of descendants staying in the
population is stochastically larger than $W_1^{(1)}+\cdots+W_1^{(L)}$,
that is, for all $j\in\N_0$,
\[
\p[W_1\geq j| W_0=L]\geq\p\bigl[W_1^{(1)}+
\cdots+W_1^{(L)}\geq j | W_0^{(1)}=
\cdots=W_0^{(L)}=1\bigr].
\]

But now, we can iterate this argument. Clearly inequality~(\ref
{eqinegtriang}) must hold in particular if we replace, on the RHS only,
the number $L$ by some $L'$ with $L'\le L$. Hence the stochastic order
is maintained through the next generation, and thus, by recurrence,
through all generations. This implies that~(\ref{eqinegtriang}) is
true for all $n\in\N_0$.

Finally, choosing $k=0$ in~(\ref{eqinegtriang}) and taking the limit
for $n\to\infty$, we obtain by independence of the processes
$(W_n^{(j)})_n$ that
%
%e16 #&#
%
\begin{eqnarray}
&& \p[W_n\to0 | W_0=L]
\nonumber\\[-8pt]\\[-8pt]
&&\qquad \le\p\bigl[W_n^{(1)}
\to0,\ldots,W_n^{(L)}\to0 | W_0^{(1)}=
\cdots=W_0^{(L)}=1\bigr]=q_W^L,\nonumber
\end{eqnarray}
which completes the proof.
\end{pf*}

%re6.1 #&#
%
\begin{rem}The superadditivity-type inequality~(\ref{eqinegtriang}) is
in general no longer correct if the w.f.-process is replaced by other
RDBPs. Indeed, a very large claim may force on the RHS all the
offspring of one subpopulation to leave, but this effect stays still
local whereas it may be large on the global LHS. This exemplifies at
the same time the adherent difficulty in estimating extinction
probabilities for arbitrary policies.
\end{rem}

%s6.2 #&#
\subsection{Uniform bounds}\label{prbounds}
We first prove Proposition~\ref{propbounds}.

\begin{pf*}{Proof of Proposition~\ref{propbounds}}
Let $\pi$ be any policy (the same arguments immediately adapt to
multiparameter policies). First note that, by definition of $N$~and~$M$,
%
%e17 #&#
%
\begin{eqnarray}
M \bigl(t,(x_k)_{k=1}^t,s \bigr)\le
Q^\pi \bigl(t,(x_k)_{k=1}^t,s \bigr)
\le N \bigl(t,(x_k)_{k=1}^t,s \bigr)
\nonumber\\[-8pt]\\[-8pt]
\eqntext{\forall t,\ \forall (x_k)_{k=1}^t,\ \forall s.}
\end{eqnarray}
We shall now show by induction that if $(\Gamma_n)_n$ is the RDBP
controlled by $\pi$, it follows that $\Gamma_n\le W_n$ a.s. for all
$n$, given $W_0=\Gamma_0=1$. Indeed, it is true at any time at which
the two processes have the same number of individuals, and hence for $n=0$.

Now, if it is true for some $n$, we deduce that, a.s.,
%
%e18 #&#
%e19 #&#
%e20 #&#
%
\begin{eqnarray}
\Gamma_{n+1}&=&Q^\pi \bigl(D_n(
\Gamma_n),\bigl(X_n^k\bigr)_{k=1}^{D_n(\Gamma
_n)},R_n(
\Gamma_n) \bigr)
\\
&\leq&  N \bigl(D_n(\Gamma_n),\bigl(X_n^k
\bigr)_{k=1}^{D_n(\Gamma_n)},R_n(\Gamma _n)
\bigr)
\\
&\leq&  N \bigl(D_n(W_n),\bigl(X_n^k
\bigr)_{k=1}^{D_n(W_n)},R_n(W_n)
\bigr)=W_{n+1},
\end{eqnarray}
as the mapping $(t,s)\mapsto N(t,(x_k)_{k=1}^t,s)$ is increasing in
both arguments. Hence the inequality is also true for $n+1$.

It follows that $\p[\Gamma_n \le W_n]=1$ for all $n$. Since the
limiting extinction probabilities of $(\Gamma_n)_n$ and $(W_n)_n$ must
exist, we must also have $q_W\le q_\Gamma$.
\end{pf*}

We now give an explicit counterexample showing that it is in general
not true that $S_n\le\Gamma_n$ a.s. for all $n$, given $S_0=\Gamma
_0=1$. The underlying idea was already explained in Section~\ref{chapnolow}.

\subsubsection*{Counterexample}
We have assumed $p_k>0$ for some $k\ge2$ [see regularity assumption
(ii) in Section~\ref{chapnatregcond}]; to fix ideas, assume that
$p_3>0$ (the argument can be adapted in any case). Then consider the
deterministic policy $\pi$ given by
%
%e21 #&#
%
\begin{equation}
\pi_t\bigl((x_k)_{k=1}^t\bigr) (j)=
\cases{ \sigma(3),&\quad if $j=1$ and $t\ge3$,
\vspace*{2pt}\cr
\sigma(1),&\quad
if $j=2$ and $t\ge3$,
\vspace*{2pt}\cr
\sigma(2),&\quad if $j=3$ and $t\ge3$,
\vspace*{2pt}\cr
\sigma(j),&
\quad otherwise,} %
\end{equation}
where $\sigma$ is the permutation such that $x_{\sigma(1)}\ge\cdots
\ge
x_{\sigma(t)}$ [i.e., by definition, $\sigma=\pi
^S_t((x_k)_{k=1}^t)$]. Let, for example,
\begin{eqnarray*}
D_0^1&=&3,\qquad
X_0^1>X_0^2>X_0^3,\qquad
X_0^1+X_0^3<R_0^1<X_0^1+X_0^2,
\\
D_1^1&=&D_1^2=3,\qquad
X_1^1+X_1^2+X_1^3\le R_1^1
\end{eqnarray*}
and then
\[
X_1^4,X_1^5,X_1^6>R_1^1+R_1^2.
\]
These events will occur simultaneously with positive probability, as
$p_3>0$. But then we immediately see that $\Gamma_2=0<3=S_2$ in this case.

%s6.3 #&#
\subsection{Extinction criterion for the w.f.-society}\label{prth1}
In this section, we will prove Theorem~\ref{th1}. In these proofs, we
will repeatedly make use of the following lemma, which we shall prove first.

%le6.2 #&#
%
\begin{Lem}\label{lemme}
Let $X_1,X_2,\ldots$ be i.i.d. real-valued nonnegative random
variables with mean $\mu<\infty$ and continuous distribution function
$F$. Further, let $(\Phi_n)_n$ be a sequence of integer-valued random
variables with $\Phi_n\to\infty$ a.s. as $n\to\infty$, and let
$(\Psi
_n)_n$ be a sequence of real random variables with $\Psi_n\to\infty$
a.s. as $n\to\infty$. Suppose that $\Psi_n/\Phi_n\to\rho$ a.s. with
$0<\rho\le\mu$, and that $\tau$ is the solution of
\[
\int_0^\tau x\,\mathrm{d}F(x)=\rho.
\]
Let $N$ be defined by~(\ref{defN}). Then $N(\Phi_n,(X_k)_{k=1}^{\Phi
_n},\Psi_n)/\Phi_n\to F(\tau)$ a.s.

Moreover, if the random variables $X_1,X_2,\ldots$
are bounded, then we have an analogous result for $M$ defined in (\ref
{defM}): defining $\theta$ as the solution of
\[
\int_\theta^b x\,\mathrm{d}F(x)=\rho,
\]
then $M(\Phi_n,(X_k)_{k=1}^{\Phi_n},\Psi_n)/\Phi_n\to1-F(\theta)$ a.s.
\end{Lem}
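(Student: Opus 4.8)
The statement is essentially a law of large numbers for the counting functions $N$ and $M$ evaluated along a random sequence $(\Phi_n,\Psi_n)$. The key observation is that $N(t,(x_k)_{k=1}^t,s)/t$ is, for large $t$, governed by the empirical distribution of the sample: if we order the claims $x_{1,t}\le\ldots\le x_{t,t}$ and keep adding the smallest ones until we would exceed $s$, then the proportion retained is controlled by how the partial sums $\frac1t\sum_{j=1}^{\lfloor \alpha t\rfloor}x_{j,t}$ behave as functions of the ``depth'' $\alpha\in[0,1]$. By the Glivenko--Cantelli theorem and standard results on $L^1$-convergence of trimmed sums, $\frac1t\sum_{j=1}^{\lfloor\alpha t\rfloor}x_{j,t}\to G(\alpha):=\int_0^{F^{-1}(\alpha)}x\,\mathrm{d}F(x)$ a.s., uniformly in $\alpha$. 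Since $F$ is continuous, $G$ is a continuous strictly increasing function of $\alpha$ on the support, so it is invertible, and the defining equation $\int_0^\tau x\,\mathrm{d}F(x)=\rho$ says precisely $G(F(\tau))=\rho$.

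First I would fix the deterministic skeleton: show that for any deterministic sequences $t_n\to\infty$ and $s_n$ with $s_n/t_n\to\rho$, one has $N(t_n,(X_k)_{k=1}^{t_n},s_n)/t_n\to F(\tau)$ a.s. This follows because $N(t_n,\cdot,s_n)/t_n$ is the largest $\alpha$ (in steps of $1/t_n$) with $\frac1{t_n}\sum_{j=1}^{\lfloor\alpha t_n\rfloor}x_{j,t_n}\le s_n/t_n$; using the uniform a.s. convergence of the partial-sum process to $G$, together with $s_n/t_n\to\rho=G(F(\tau))$ and the continuity and strict monotonicity of $G$, a sandwiching argument pins the ratio down to $G^{-1}(\rho)=F(\tau)$. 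The strict monotonicity of $G$ near $F(\tau)$ is what makes the sandwich work; note $\rho\le\mu=G(1)$ guarantees $\tau$ exists and $F(\tau)\le1$.

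Next I would upgrade from deterministic $(t_n,s_n)$ to the random $(\Phi_n,\Psi_n)$. Here the subtlety is that the claim variables $X_k$ appearing inside $N(\Phi_n,(X_k)_{k=1}^{\Phi_n},\Psi_n)$ are the same underlying sequence regardless of the value of $\Phi_n$, so one cannot simply substitute. The clean way is to prove the deterministic statement in the strong form ``$\sup_{t\ge T,\ |s/t-\rho|\le\varepsilon_T}\big|N(t,(X_k)_{k=1}^{t},s)/t-F(\tau)\big|\to0$ a.s. as $T\to\infty$, for suitable $\varepsilon_T\to0$'', which again reduces to the uniform convergence of the partial-sum process $\frac1t\sum_{j=1}^{\lfloor\alpha t\rfloor}X_{j,t}$ over $t\ge T$ and $\alpha\in[0,1]$; then one plugs in $t=\Phi_n$, $s=\Psi_n$, using $\Phi_n\to\infty$ and $\Psi_n/\Phi_n\to\rho$ a.s. to land inside the good set eventually. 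The main obstacle, and the point deserving the most care, is exactly this uniform (in sample size) a.s. control of trimmed partial sums of order statistics — it is where one must invoke, e.g., the a.s. convergence of the empirical quantile process or Glivenko--Cantelli combined with an integrability tail estimate using $\mu<\infty$; the monotonicity of $N$ in $t$ and $s$ (Lemma~\ref{lem:N}) helps reduce the sup over $t$ to a sup over a subsequence, which simplifies the Borel--Cantelli bookkeeping.

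Finally, the statement for $M$ under the boundedness assumption: when the $X_k$ are bounded by $b$, the same argument applies verbatim with ``smallest first'' replaced by ``largest first'', i.e. with $G$ replaced by $\tilde G(\alpha):=\int_{F^{-1}(1-\alpha)}^{b}x\,\mathrm{d}F(x)$, so that the defining equation $\int_\theta^b x\,\mathrm{d}F(x)=\rho$ reads $\tilde G(1-F(\theta))=\rho$ and the limit of $M(\Phi_n,\cdot,\Psi_n)/\Phi_n$ is $1-F(\theta)$. Boundedness is used here (and only here) to ensure $\tilde G(\alpha)$ is finite for all $\alpha$ and, more importantly, to guarantee that the a.s. convergence of the partial sums of the \emph{largest} $\lfloor\alpha t\rfloor$ order statistics holds uniformly — without an upper bound the top order statistics can grow with $t$ and the analogue of Glivenko--Cantelli for the upper trimmed sum fails. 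I do not expect any further difficulty beyond mirroring the $N$-argument; the cap-unimodality of $M$ in $t$ (Lemma~\ref{lem:unimod}) is not needed for this lemma but only later.
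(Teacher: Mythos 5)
Your argument is correct, but it follows a genuinely different route from the paper's. The paper does not re-derive the law of large numbers for the counting functions at all: it quotes Theorem~2.2 (for $N$) and Theorem~2.3 (for $M$) of Bruss and Robertson (1991), which refine the 1987 result of Coffman et al.\ and give precisely the a.s.\ convergence $N(n,s_n)/n\to F(\tau(s))$ for deterministic $s_n$ with $s_n/n\to s\in(0,\mu]$; the only work done in the paper's proof is the passage to the random pair $(\Phi_n,\Psi_n)$, handled by the monotonicity of $N$ in $s$ (a sandwich between $s=(\rho\pm\epsilon)\Phi_n$), pathwise substitution of the random index $\Phi_n\to\infty$, and continuity of $\tau(\cdot)$ and $F$, with the $M$-case treated verbatim under the boundedness hypothesis. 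You instead prove the deterministic core from scratch, via uniform a.s.\ convergence of the lower trimmed sums $\frac1t\sum_{j\le\alpha t}X_{j,t}$ to $G(\alpha)=\int_0^{F^{-1}(\alpha)}x\,\mathrm{d}F(x)$ (Glivenko--Cantelli, monotonicity in $\alpha$, continuity and strict monotonicity of $G$), and then perform essentially the same sandwich; this is sound, and it makes the lemma self-contained where the paper leans on external theorems that it also needs later for the Chernov-type estimates of Lemma~\ref{lem:coff}. Two small remarks on your write-up: the random-index step is less delicate than you suggest --- a.s.\ convergence along all integers $t$ already permits plugging in $\Phi_n(\omega)\to\infty$ pathwise, so your ``$\sup_{t\ge T}$'' strengthening is automatic, and the only genuine issue is uniformity in $s$ over the band $|s/t-\rho|\le\epsilon$, which your (and the paper's) monotone sandwich settles; and your explanation of where boundedness enters is inaccurate though harmless, since for a fixed positive proportion and $\mu<\infty$ the upper trimmed sum $\frac1t\sum_{j>t(1-\alpha)}X_{j,t}$ does converge a.s.\ (full average minus lower trimmed sum), so in your approach boundedness is not really what rescues the $M$-part, whereas in the paper it is imposed because the cited Theorem~2.3 assumes it.
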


\begin{pf}
Since $\Psi_n/\Phi_n\to\rho>0$ a.s. as $n\to\infty$, we have
%
%e22 #&#
%
\begin{equation}
\forall\varepsilon>0\qquad  \p \biggl[\mathop{\operatorname{sup}}_{n\ge
b}\biggl
\llvert \frac{\Psi_n}{\Phi_n}-\rho\biggr\rrvert <\varepsilon \biggr]\to1\qquad
\mbox{a.s. as $b\to\infty$.}
\end{equation}

To simplify notation, we write $N(t,s):=N(t,(X_k)_{k=1}^t,s)$. (Note
that this simplification is here admissible since the distribution of
the string of claims depends only on $t$.)

As the function $N(t,s)$ is stochastically increasing in $s$, we deduce
from almost-sure convergence of $\Phi_n/\Psi_n$ to $\rho$ that, for all
$0\le\varepsilon<\rho$ and all $\delta>0$, the inequalities
%
%e23 #&#
%
\begin{equation}
\label{ineg} \frac{N(\Phi_n,(\rho-\varepsilon)\Phi_n)}{\Phi_n}\le\frac{N(\Phi
_n,\Psi
_n)}{\Phi_n}\le\frac{N(\Phi_n,(\rho+\varepsilon)\Phi_n)}{\Phi_n}
\end{equation}
must hold (simultaneously), for all $n$ sufficiently large, with
probability at least $1-\delta$.

We now use Theorem 2.2 (on page 615) of \citet{Bruss91}. This theorem
[refining a result of \citet{Coffman87}] implies that
%
%e24 #&#
%
\begin{equation}
\frac{N(n, s_n)}{n}\to F\bigl(\tau(s)\bigr)\qquad\mbox{a.s. as $n\to\infty$,}
\end{equation}
where $\tau(s)$ is the solution of
%
%e25 #&#
%
\begin{equation}
\int_0^{\tau(s)}x \,\mathrm{d}F(x)=\lim
_{n\to\infty}\frac{s_n}n=:s,
\end{equation}
provided that the latter limit exists and satisfies $0<s\le\mu=\E[X]$.

Now note that $\tau(\cdot)$ is continuous on $(0,\mu)$ because $F$ is
assumed to be continuous on its support.

As $\Phi_n\to\infty$ a.s., the left-hand side variable of (\ref{ineg})
must converge a.s. to $F(\tau(\rho-\varepsilon))$ and the right-hand side
variable of (\ref{ineg}) a.s. to $F(\tau(\rho+\varepsilon))$. Since
$\varepsilon>0$ is arbitrary and
\[
\lim_{\varepsilon\to0^+}F\bigl(\tau(\rho\pm\varepsilon)\bigr)=F\bigl(\tau(
\rho)\bigr)
\]
by continuity of $F(\cdot)$ and $\tau(\cdot)$, the first part of the
lemma is proved.

The second part of the lemma, that is, the statement that
\[
M\bigl(\Phi_n,(X_k)_{k=1}^{\Phi_n},
\Psi_n\bigr)/\Phi_n\to1-F(\theta)\qquad\mbox{a.s.},
\]
can now be proved similarly, using Theorem 2.3 of \citet{Bruss91}. Note
that we need here, as stated, the assumption that the resource claims
are bounded, since the cited Theorem 2.3 may not hold otherwise.
\end{pf}

We can now prove Theorem~\ref{th1}.

\begin{pf*}{Proof of Theorem~\ref{th1}}
We first prove statement (a).
Suppose $r\le m\mu$ and $mF(\tau)<1$, and let
%
%e26 #&#
%
\begin{equation}
\label{eqw1} W^\infty(\Omega)=\bigl\{\omega\in\Omega\dvtx W_n(
\omega)\to\infty\mbox{ as $n\to \infty$}\bigr\}.
\end{equation}
In the following, we can use the shorthand notation
$N_n(t,s):=N(t,(X_n^k)_{k=1}^t,\break s)$, where again the index $n$ will be
dropped only if the distribution is used.

Now look at
%
%e27 #&#
%
\begin{equation}
\label{ut1} \E[W_{n+1}|W_n=w]=\E\bigl[N_n
\bigl(D_n(w),R_n(w)\bigr)|W_n=w\bigr].
\end{equation}
Since $R_n(w)/w\to r$ a.s. as $w\to\infty$ and $D_n(w)/w\to m$ a.s. as
$w\to\infty$ and $m>0$, we have $R_n(w)/D_n(w)\to\rho:=r/m$ a.s.
According to Lemma 4.1 (on page 622) of \citet{Bruss91}, there exists a
sequence $T_w\to\tau$ a.s. as $w\to\infty$ with
%
%e28 #&#
%
\begin{equation}
\label{ut2} \E\bigl[N_n\bigl(D_n(w),R_n(w)
\bigr)|W_n=w\bigr]\le\E\bigl[D_n(w)\bigr]F(T_w),
\end{equation}
where $\tau$ is the solution of
\[
\int_0^\tau x \,\mathrm{d}F(x)=\rho=
\frac{r}{m}.
\]
Since $F$ is continuous we can find, for each $\varepsilon>0$, a value
$w=w(\varepsilon)$ such that $F(T_v)\le F(\tau+\varepsilon)$ for all $v\ge
w$. Thus, from equations (\ref{ut1}) and (\ref{ut2}),
%
%e29 #&#
%
\begin{equation}
\E[W_{n+1}|W_n=v]\le mv F(\tau+\varepsilon),\qquad v\ge w.
\end{equation}
Hence we get
%
%e30 #&#
%e31 #&#
%e32 #&#
%
\begin{eqnarray}
\E[W_{n+1}|W_n\ge w]&=&\sum_{v=w}^\infty
\p[W_n=v|W_n\ge w] \E [W_{n+1}|W_n=v]
\\
&\leq&  m F(\tau+\varepsilon) \sum_{v=w}^\infty v
\p[W_n=v|W_n\ge w]
\\
&=&m F(\tau+\varepsilon) \E[W_n|W_n\ge w].\label{eqw2}
\end{eqnarray}
Since $mF(\tau)<1$, we can choose, again by continuity of $F$, a
positive value $\varepsilon$ sufficiently small such that $mF(\tau
+\varepsilon
)<1$. The latter implies then that $(\E[W_n])_n$ must be bounded.
Consequently, Proposition~\ref{markov} implies that\break $\p[W^\infty
(\Omega
)]=0$, or equivalently, since $(W_n)_n$ is Markovian, $q_W=1$.

This proves the first part of Theorem~\ref{th1}(a).

To see the second part of Theorem~\ref{th1}(a), we now suppose that
$r\le m\mu$ and $mF(\tau)>1$.
Recall that we had supposed that any finite level can be reached with
a strictly positive probability [see regularity assumption~(iii) in
Section~\ref{chapnatregcond}]. Therefore it suffices to show that, for
$w$ sufficiently large,
%
%e33 #&#
%
\begin{equation}
\label{equsuff} \exists \alpha\ge1 \qquad\mathop{\operatorname{liminf}}_{k\to
\infty}
\p \bigl[W_{n+k}\ge\alpha^{k}w|W_n\ge w
\bigr]>0.
\end{equation}
Let now
%
%e34 #&#
%
\begin{equation}
\label{eqw3} h(j,\alpha,w):=\p\bigl[W_{n+j}<\alpha
^jw|W_{n+j-1}\ge\alpha^{j-1}w\bigr].
\end{equation}
It follows that
%
%e35 #&#
%
\begin{eqnarray}
&& \p\bigl[W_{n+k}\ge\alpha^kw|W_n\ge w\bigr]
\nonumber\\[-8pt]\\[-8pt]
&&\qquad \ge
\bigl(1-h(k,\alpha,w)\bigr)\p \bigl[W_{n+k-1}\ge \alpha^{k-1}w|W_n
\ge w\bigr]\nonumber
\end{eqnarray}
and thus by recurrence on $k$ that
%
%e36 #&#
%
\begin{equation}
\p\bigl[W_{n+k}\ge\alpha^kw|W_n\ge w\bigr]\ge
\prod_{j=1}^k\bigl(1-h(j,\alpha,w)\bigr).
\end{equation}
Therefore a sufficient condition for (\ref{equsuff}) to hold is
%
%e37 #&#
%
\begin{equation}
\label{condsuff} \sum_{j=1}^\infty h(j,
\alpha,w)<\infty.
\end{equation}
Since $N_n(\cdot,\cdot)$, $D_{n+j}(\cdot)$ and $R_{n+j}(\cdot)$ are all
stochastically increasing in their arguments, we have
%
%e38 #&#
%e39 #&#
%e40 #&#
%
\begin{eqnarray}
h(j,\alpha,w)&=&\p\bigl[W_{n+j}<\alpha^jw|W_{n+j-1}
\ge\alpha^{j-1}w\bigr]
\\
&=&\p\bigl[N_{n+j}\bigl(D_{n+j}(W_{n+j-1}),
\nonumber\\[-8pt]\\[-8pt]
&&\hspace*{10pt} R_{n+j}(W_{n+j-1})
\bigr)<\alpha ^jw|W_{n+j-1}\ge\alpha^{j-1}w\bigr]\nonumber
\\
&\leq& \p\bigl[N_{n+j}\bigl(D_{n+j}\bigl(\bigl\lfloor
\alpha^{j-1}w\bigr\rfloor \bigr),R_{n+j}\bigl(\bigl\lfloor
\alpha^{j-1}w\bigr\rfloor\bigr)\bigr)<\alpha^jw
\bigr].\label{inequ}
\end{eqnarray}
Choose $\varepsilon>0$ such that $(m-\varepsilon)F(\tau)>1$, and put
$\alpha
=(m-\varepsilon)F(\tau)$. Then we have $p_0^\alpha<1$ so that
%
%e41 #&#
%
\begin{equation}
\sum_{j=1}^\infty p_0^{\lfloor\alpha^{j-1}w\rfloor}=
\sum_{j=1}^\infty\p \bigl[D_{n+j}
\bigl(\bigl\lfloor\alpha^{j-1}w\bigr\rfloor\bigr)=0\bigr]<\infty.
\end{equation}
Since the $D_{n+j}$ are independent random variables, it follows from
the Borel--Cantelli lemma that $\p[D_{n+j}(\lfloor\alpha
^{j-1}w\rfloor
)=0$ $j$-i.o.$]=0$. Therefore, for $j$ sufficiently large,
inequality (\ref{inequ}) is equivalent to
%
%e42 #&#
%
\begin{equation}
\label{equhLR} h(j,\alpha,w)\le\p\bigl[L_j^*<R_j^*\bigr],
\end{equation}
where the LHS variable is defined by
%
%e43 #&#
%
\begin{equation}
L_j^*=\frac{N_{n+j}(D_{n+j}(\lfloor\alpha^{j-1}w\rfloor
),R_{n+j}(\lfloor
\alpha^{j-1}w\rfloor))}{D_{n+j}(\lfloor\alpha^{j-1}w\rfloor
)}\label{defL}
\end{equation}
and the corresponding RHS variable by
%
%e44 #&#
%
\begin{equation}
R_j^*=\frac{\alpha^{j}w}{D_{n+j}(\lfloor\alpha^{j-1}w\rfloor
)}.\label{defR}
\end{equation}
Now let $\beta_j=\lfloor\alpha^{j-1}w\rfloor$.

First look at the random variables $D_{n+j}(\beta_j)/\beta_j\sim
D(\beta
_j)/\beta_j$ (and recall that, whenever we drop indices of variables in
our notation, then this means that we use information on their \textit{distributional} prescription only). $D(k)$ is hence distributed like a
sum of $k$ i.i.d. random variables with finite mean $m$ and finite
variance $\sigma_D^2$, say.

Recall that $\alpha>1$, and that $\beta_j/j\to\infty$ as $j\to
\infty$.
Therefore, it follows from the Hsu--Robbins theorem of complete
convergence [see Theorem~1 of~\citet{HsuRobbins}, or~\citet{Asmussen80}]
that $D(\beta_j)/\beta_j\to m$ completely as $j\to\infty$. Note that
complete convergence holds row-wise in the reproduction matrix since
all $D_k^j$ are i.i.d. and have finite variance. This implies
%
%e45 #&#
%
\begin{equation}
\forall\delta>0 \qquad\sum_{j=1}^\infty\p
\biggl[\biggl\llvert \frac
{D(\beta
_j)}{\beta_j}-m\biggr\rrvert >\delta \biggr]<
\infty.\label{equm}
\end{equation}
Further, since $\alpha^{j}w/\beta_j\to\alpha$ as $j\to\infty$ and
$D_{n+j}(\cdot)\sim D(\cdot)$, we obtain from (\ref{defR}) and (\ref{equm})
%
%e46 #&#
%
\begin{equation}
\label{equR} \forall\delta>0 \qquad\sum_{j=1}^\infty
\p \biggl[\biggl\llvert R_j^*-\frac\alpha m\biggr\rrvert >\delta
\biggr]<\infty.
\end{equation}
Second, to study the convergence of $L_j^*$ defined in (\ref{defL}) we
turn to Lemma~\ref{lemme} with $\Phi_j=D(\beta_j)$ and $\Psi
_j=R(\beta
_j)$. Since $\Phi_j/\beta_j\to m$ completely and $\Psi_j/\beta_j\to r$
completely (again by the Hsu--Robbins theorem), we have
%
%e47 #&#
%
\begin{equation}
\frac{\Psi_j}{\Phi_j}\to\rho=\frac{r}m\qquad\mbox{completely, as $j\to
\infty$.}
\end{equation}
Therefore, in particular, $\Psi_j/\Phi_j\to\rho$ a.s., so that the
conditions of Lemma~\ref{lemme} are satisfied. It follows that \emph{if} $L_j^*$ in (\ref{defL}) allows for a limit (in some sense) $l$,
say, then we must have $l=F(\tau)$, where $\tau$ is defined as in
Lemma~\ref{lemme}.

Using this and the Chernov-type estimates obtained by~\citet{Coffman87}
(see Theorems~2 and~3) with $a=j\delta$, we obtain after some
straightforward simplifications,
%
%e48 #&#
%
\begin{equation}
\p \biggl[\biggl\llvert \frac{N(\cdot,\cdot)}j-F(\tau)\biggr\rrvert >\delta \biggr]
\le 2e^{-({j\delta^2})/({4F(\tau)})}.
\end{equation}
Again $N_{n+j}(\cdot,\cdot)\sim N(\cdot,\cdot)$ and $\beta_j/j\to
\infty
$, and thus $L_j^*\to F(\tau)$ completely as $j\to\infty$. This implies
the convergence
%
%e49 #&#
%
\begin{equation}
\label{equL} \forall\delta>0 \qquad\sum_{j=1}^\infty
\p\bigl[\bigl|L_j^*-F(\tau)\bigr|>\delta \bigr]<\infty.
\end{equation}
Now choose $\delta=\frac{1}2|F(\tau)-\alpha/m|>0$. Note that the
event $\{
L_j^*<R_j^*\}$ can only occur if $|L_j^*-F(\tau)|>\delta$ or
$|R_j-\alpha/m|>\delta$. Therefore, from (\ref{equhLR}),
%
%e50 #&#
%e51 #&#
%
\begin{eqnarray}
h(j,\alpha,w)&\leq& \p\bigl[L_j^*<R_j^*\bigr]
\\
&\leq& \p\bigl[\bigl|L_j^*-F(\tau)\bigr|>\delta\bigr]+\p \biggl[\biggl\llvert
R_j^*- \frac\alpha m\biggr\rrvert >\delta \biggr]
\end{eqnarray}
so that, according to (\ref{equR}) and (\ref{equL}),
%
%e52 #&#
%
\begin{equation}
\sum_{j=1}^\infty h(j,\alpha,w)<\infty.
\end{equation}
This completes the proof of statement (a).

Statement (b) is obtained similarly (and more easily) using
Theorem 2.1 of~\citet{Bruss91}.

Finally, using Lemma~\ref{lemme}, it is clear that, if $r\le m\mu$ and
$mF(\tau)>1$, conditioning on survival, we have $W_{n+1}/W_{n}\to
mF(\tau)$ a.s. as $n\to\infty$, and thus, for any $\varepsilon,\delta>0$,
there exists some large $L_{\varepsilon,\delta}>0$ such that
%
%e53 #&#
%
\begin{equation}
\label{eqpremauv} \p \biggl[ \sup_{n\ge L_{\varepsilon,\delta}}\biggl\llvert
\frac
{W_{n+1}}{W_{n}}-mF(\tau)\biggr\rrvert <\delta \Big| \lim
_{n\to\infty
}W_n=\infty \biggr]\ge1-\varepsilon.
\end{equation}
This precisely means that, conditioning on survival, the w.f.-process
behaves more and more like a GWP with the modified reproduction mean
$\tilde m=mF(\tau)$.
\end{pf*}

%s6.4 #&#
\subsection{Extinction criterion for the s.f.-society (first part)}\label
{proofth2}
In this section, we are concerned with the proof of Theorem~\ref{th2}.
The first part of Theorem~\ref{th2}(a) can be obtained by similar
considerations as for Theorem~\ref{th1} (the extinction criterion for
the w.f.-society), the role of $\tau$ being now played by $\theta$,
defined by
\[
\int_\theta^bx \,\mathrm{d}F(x)=\frac{r}m,
\]
as in Theorem 2.3 of \citet{Bruss91}. However, here we need the
assumption that the resource claims are bounded as assumed in the
model, simply because the cited Theorem 2.3 may not hold otherwise.
(This was not needed in the case of the w.f.-society where we only used
that the claims have a finite variance.)

\begin{pf*}{Proof of Theorem~\ref{th2}\textup{(a)(i)}}
The proof of the first part of Theorem~\ref{th2}(a) follows the same
reasoning as for Theorem~\ref{th1} [see equations~(\ref{eqw1})--(\ref{eqw2})] and yields correspondingly that, for all
$\varepsilon>0$, there
exists a sufficiently large $s$ such that
%
%e54 #&#
%
\begin{equation}
\E[S_{n+1}|S_n\ge s]\le m\bigl(1-F(\theta-\varepsilon)\bigr)
\E[S_n|S_n\ge s].
\end{equation}
Choosing $\varepsilon$ sufficiently small such that $m(1-F(\theta
-\varepsilon
))<1$ shows that $(\E[S_{n+1}|S_n\ge s])_n$ is bounded, so that $(\E
[S_n])_n$ is bounded too. Hence, similarly as before, $q_S=1$.
\end{pf*}

For the other parts of Theorem~\ref{th2}, adapting the proof of
Theorem~\ref{th1} seems difficult. The major technical difficulty is
that we have to deal with the cap-unimodality of $M(\cdot,\cdot)$ in
its first argument [while $N(\cdot,\cdot)$ was increasing in both arguments].

Cap-unimodality implies that the minimum of $M(D(t),R(t))$ for given
$D(t)$ and $R(t) $ over an interval
$[t_1, t_2]$ must be taken on the border, but gives less information
about the corresponding maximum. The estimate $t_2-t_1$ for the
difference between the maximum and the minimum over $[t_1,t_2]$ (see Lemma~\ref
{lemunimod}) is here too crude. We therefore have to proceed
differently, and, in order to use arguments developed later, the rest
of this proof is postponed to Section~\ref{proofth2+}.

%s6.5 #&#
\subsection{Corollaries~\texorpdfstring{\protect\ref{cor1}}{4.6} and \texorpdfstring{\protect\ref{cor2}}{4.9}}\label{prcors}
We shall need the following two lemmas, which we prove first:

%
%le6.3 #&#
%
\begin{Lem}\label{lemcor1}
Assume $r>\mu$, and let $\tau$ be defined by
\[
\int_0^\tau x\,\mathrm{d}F(x)=\frac{r}m.
\]
Then $mF(\tau)>1$.
\end{Lem}

\begin{pf}
Let $a\ge0$ be the infimum of the support of $F$. As $F(x)>0$ for all
$x>a$ and as $\tau>a$ [because $\int_0^\tau x\,\mathrm{d}F(x)=\frac{r}m>\frac\mu m\ge0$], we deduce $F(\tau)>0$, and we can write
\[
mF(\tau)\int_0^\tau x\frac{\mathrm dF(x)}{F(\tau)}=r>\mu,
\]
or equivalently,
\[
mF(\tau)\E[X|X\le\tau]>\mu.
\]
However, $\E[X|X\le\tau]\le\E[X]=\mu$, so that the above inequality
cannot hold unless $mF(\tau)>1$.
\end{pf}

%
%le6.4 #&#
%
\begin{Lem}\label{lemcor2}
Assume $r<\mu$, and let $\theta$ be defined by
\[
\int_\theta^\infty x\,\mathrm{d}F(x)=\frac{r}m.
\]
Then $m(1-F(\theta))<1$.
\end{Lem}

\begin{pf}
Let $b\ge0$ be the least upper bound of claims. As $1-F(x)>0$ for all
$x<b$ and as $\theta<b$ [because $\int_\theta^\infty x\,\mathrm
{d}F(x)=\frac{r}m>0$], we deduce $1-F(\theta)>0$, and we can write
\[
m\bigl(1-F(\theta)\bigr)\int_\theta^\infty x
\frac{\mathrm dF(x)}{1-F(\theta
)}=r<\mu,
\]
or equivalently
\[
m\bigl(1-F(\theta)\bigr)\E[X|X>\theta]<\mu.
\]
However, $\E[X|X>\theta]\ge\E[X]=\mu$, so that the above inequality
implies $m(1-F(\theta))<1$.
\end{pf}

We can now prove Corollary~\ref{cor1}.

\begin{pf*}{Proof of Corollary~\ref{cor1}}
Using Lemma~\ref{lemcor1} and Theorem~\ref{th1}{(a)(ii)}, part~(a) is
immediate. It remains to prove part~(b). Using the Cauchy--Schwarz
inequality, we deduce directly from the definition of $\tau$,
%
%e55 #&#
%e56 #&#
%
\begin{eqnarray}
\frac{r}m&=&\int_0^\tau x\,\dd F(x)=\mu-\int
_\tau^bx\,\dd F(x)
\\
&\geq& \mu-\sqrt{1-F(\tau)}\sqrt{\int_\tau^bx^2\,\dd F(x)}
\ge\mu-\sqrt {1-F(\tau )}\sqrt{\E\bigl[X^2\bigr]},
\end{eqnarray}
so that, after straightforward simplifications, $mF(\tau)\le m-(m\mu
-r)^2/\break (m\E[X^2])$. Hence, if
\[
{\E\bigl[X^2\bigr]}<(m\mu-r)^2/\bigl(m(m-1)\bigr),
\]
we deduce $mF(\tau)<1$ and thus, by Theorem~\ref{th1}{(a)(i)}, we must
have $q_W=1$.
\end{pf*}

We now turn to the proof of Corollary~\ref{cor2}.

\begin{pf*}{Proof of Corollary~\ref{cor2}}
Using Lemma~\ref{lemcor2} and Theorem~\ref{th2}{(a)(i)}, part~(a) is
immediate.

It remains to prove part~(b).

Using the Cauchy--Schwarz inequality, we deduce directly from
the definition of $\theta$
%
%e57 #&#
%
\begin{eqnarray}
\qquad \frac{r}{m}&=&\int_\theta^bx\,\dd F(x)\le
\sqrt{1-F(\theta)}\sqrt{\int_\theta
^bx^2\,\dd F(x)}\le\sqrt{1-F(\theta)}\sqrt{\E
\bigl[X^2\bigr]}.
\end{eqnarray}
Hence, if ${\E[X^2]}<r^2/m$, we deduce
\[
m\bigl(1-F(\theta)\bigr)\ge r^2/\bigl(m\E\bigl[X^2\bigr]
\bigr)>1
\]
and thus, by Theorem~\ref{th2}{(a)(ii)}, we must have $q_S<1$.
\end{pf*}

%s6.6 #&#
\subsection{Conditional envelopment theorem}\label{prenvelop1}
We prove here Proposition~\ref{propenvelop1}.

\begin{pf*}{Proof of Proposition~\ref{propenvelop1}}
Assume that $\Gamma_n\to\infty$ is given, that is, that $\underline
\gamma>0$ is given. First compare the process $(\Gamma_n)_n$ with
$(S_n)_n$. We see from the corresponding counting functions $Q^\pi$ and
$M$ that
%
%e58 #&#
%e59 #&#
%
\begin{eqnarray}
\underline\gamma&=&\liminf_{n\to\infty}\frac{\Gamma_{n+1}}{\Gamma
_n} = \liminf
_{n\to\infty}\frac{Q^\pi_n (D_n(\Gamma
_n),R_n(\Gamma
_n) )}{\Gamma_n}
\\
&\geq& \liminf_{n\to\infty}\frac{M_n (D_n(\Gamma_n),R_n(\Gamma
_n)
)}{\Gamma_n},
\end{eqnarray}
where the last inequality follows from the definition of the function
$M$. Since the latter is increasing in the second argument and
cap-unimodal in the first argument, we define (neglecting the
floor--roof symbols which are here of no importance)
%
%e60 #&#
%
\begin{equation}
\qquad \widetilde M^\varepsilon_n(\Gamma_n)=\min \bigl
\{M_n\bigl((m-\varepsilon )\Gamma _n,(r-\varepsilon)
\Gamma_n\bigr),  M_n\bigl((m+\varepsilon)
\Gamma_n,(r -\varepsilon )\Gamma _n\bigr) \bigr\},
\end{equation}
so that, for all $0<\varepsilon<\min(r,m)$,
%
%e61 #&#
%
\begin{equation}
\label{plop1} \underline\gamma\ge\liminf_{n\to\infty}
\frac{1}{\Gamma
_n}\widetilde M^\varepsilon_n(\Gamma_n)=
\liminf_{n\to\infty}\frac{m}{D_n(\Gamma
_n)}\widetilde M^\varepsilon_n(
\Gamma_n).
\end{equation}
Now, let $\theta^-_\varepsilon$ and $\theta^+_\varepsilon$ be defined,
respectively, by
%
%e62 #&#
%
\begin{equation}
\int_{\theta_\varepsilon^-}^\infty x \,\dd F(x)=\frac{r-\varepsilon
}{m-\varepsilon
}\quad
\mbox{and}\quad\int_{\theta_\varepsilon^+}^\infty x \,\dd F(x)=
\frac
{r-\varepsilon}{m+\varepsilon}.
\end{equation}
Then, from Lemma~\ref{lemme}, we get
%
%e63 #&#
%
\begin{equation}
\label{plop2} \frac{\widetilde M^\varepsilon_n(\Gamma_n)}{D_n(\Gamma_n)}\ge\min \bigl\{ 1-F\bigl(\theta_\varepsilon^-
\bigr), 1-F\bigl(\theta_\varepsilon^+\bigr) \bigr\},
\end{equation}
for all sufficiently large $n$. Hence, using equations~(\ref{plop1})
and~(\ref{plop2}), as well as the continuity of $\theta_\varepsilon^-$ and
$\theta_\varepsilon^+$ as functions of $\varepsilon$, we deduce
%
%e64 #&#
%
\begin{equation}
\underline\gamma\ge m\bigl(1-F(\theta)\bigr).
\end{equation}

Second, we must compare $(\Gamma_n)_n$ with $(W_n)_n$. This is done
similarly and more easily because $N(\cdot, \cdot) $ is monotone
increasing in both arguments. This yields then directly the other
stated inequality $\bar\gamma\le mF(\tau)$ and the proof is complete.
\end{pf*}

%s6.7 #&#
\subsection{Unconditional envelopment theorem}\label{chapboundproof}
In this section, we are concerned with the proof of Theorem~\ref
{thbounds}. In the case when $r\le m\mu$, we define $\tau$ and
$\theta
$ as in the statement of Theorems~\ref{th1} and~\ref{th2}. When
$r>m\mu
$, these are not defined, but, to simplify notations, we then define
$F(\tau):=1$ and $1-F(\theta):=1$.

First of all, we shall need the following interesting
result, which is a far-reaching strengthening of equation~(\ref{eqpremauv}):

%
%th6.5 #&#
%
\begin{theor}\label{thborneas}
For any $\varepsilon>0$ and any $\delta>0$, there exists some sufficiently
large $L_{\varepsilon,\delta}>0$ such that, if $mF(\tau)>1$,
\[
\p \biggl[ \sup_{l\ge0}\biggl\llvert \frac{W_{n+l+1}}{W_{n+l}}-
mF(\tau )\biggr\rrvert <\delta \Big| W_n\ge L_{\varepsilon,\delta}
\biggr]\ge 1-\varepsilon,
\]
and similarly, if $m(1-F(\theta))>1$,
\[
\p \biggl[ \sup_{l\ge0}\biggl\llvert \frac{S_{n+l+1}}{S_{n+l}}-
m\bigl(1-F(\theta )\bigr)\biggr\rrvert <\delta \Big| S_n\ge
L_{\varepsilon,\delta} \biggr]\ge 1-\varepsilon.
\]
\end{theor}

It should be noted that this result constitutes a new proof of
part~{(a)(ii)} of Theorem~\ref{th1}, and it will be used to deduce
part~{(a)(ii)} of Theorem~\ref{th2} in the next section. In order to
prove this result, we shall crucially make use of the following
Chernov-type estimates, which we prove first, based on Theorems~2 and~3
of~\citet{Coffman87}:

%
%le6.6 #&#
%
\begin{Lem}\label{lemcoff}
For any $\delta>0$, there exists constants $C,c>0$ such that, for all
$n,j\ge0$,
%
%e65 #&#
%
\begin{equation}
\p \biggl[ \biggl\llvert \frac{W_{n+1}}{W_n}-mF(\tau)\biggr\rrvert >\delta
\Big| W_n=j \biggr]\le Ce^{-cj},
\end{equation}
and similarly,
%
%e66 #&#
%
\begin{equation}
\p \biggl[ \biggl\llvert \frac{S_{n+1}}{S_n}-m\bigl(1-F(\theta)\bigr)\biggr
\rrvert >\delta \Big| S_n=j \biggr]\le Ce^{-cj}.
\end{equation}
\end{Lem}

\begin{pf}
By definition,
%
%e67 #&#
%
\begin{eqnarray}
&& \p \biggl[ \biggl\llvert \frac{W_{n+1}}{W_n}-mF(\tau)\biggr\rrvert >
\delta \Big| W_n=j \biggr]
\nonumber\\[-8pt]\\[-8pt]
&&\qquad =\p \biggl[\biggl\llvert
\frac{N_n(D_n(j),R_n(j))}{j}-mF(\tau )\biggr\rrvert >\delta \biggr].\nonumber
\end{eqnarray}
Let $\eta>0$. As $N_n:=N_n(\cdot, \cdot)$ is increasing in both
arguments, conditioning on $|R_n(j)/j-r|\le\eta$ (resp.,
$>\eta
$) and on $|D_n(j)/j-m|\le\eta$ (resp., $>\eta$) in the RHS, we obtain,
after several elementary transformations, that the latter rhs is
bounded above by
%
%e68 #&#
%
\begin{eqnarray}\label{eqchernpr}
&& \p \biggl[\biggl\llvert \frac{N_n(j(m+\eta),j(r+\eta))}j-mF(\tau)\biggr\rrvert >\delta
\biggr]\nonumber
\\
&&\quad {} +\p \biggl[\biggl\llvert \frac{N_n(j(m-\eta),j(r-\eta))}j-mF(\tau )\biggr\rrvert >\delta
\biggr]
\\
&&\quad {}+\p \biggl[\biggl\llvert \frac{D_n(j)}j-m\biggr\rrvert >\eta \biggr]+\p
\biggl[\biggl\llvert \frac{R_n(j)}j-r\biggr\rrvert >\eta \biggr].\nonumber
\end{eqnarray}
Choosing $\eta$ small enough such that $|(m+\eta)F(\tau((r+\eta
)/(m+\eta
)))-mF(\tau)|<\delta/2$, the first term of~(\ref{eqchernpr}) becomes
smaller than
%
%e69 #&#
%
\begin{eqnarray}
\label{eqboundpn}
&& \p \biggl[\biggl\llvert \frac{N_n(j(m+\eta),j(r+\eta))}j-(m+\eta)F \biggl(\tau
\biggl(\frac{r+\eta}{m+\eta} \biggr) \biggr)\biggr\rrvert >\frac\delta2 \biggr]
\nonumber\\[-8pt]\\[-8pt]
&&\qquad \leq  2 \exp \biggl(-\frac{\delta^2j}{16mF(\tau)+8\delta} \biggr),\nonumber
\end{eqnarray}
where the last inequality follows from Theorems~2 and~3 of \citet{Coffman87}. The second term of~(\ref{eqchernpr}) can be bounded similarly.

The two last terms also satisfy an exponential bound by Hoeffding's
inequality [see Theorem~2 of~\citet{Hoeff}], since the random variables
$D_n^k$ and $R_n^k$ are assumed to be bounded. This proves the result
in the weakest-first case.

As far as the strongest-first case is concerned, now using the
cap-unimodality as well as a simple bound for the minimum and the
maximum of $M_n$ over the respective intervals (see Lemma~\ref
{lemunimod}), we can deduce a bound similar to~(\ref{eqchernpr})
(with some additional terms), and the result will follow in an
analogous way.
\end{pf}

The following result constitutes a first step in the proof of
Theorem~\ref{thborneas}:

%pr6.7 #&#
%
\begin{Prop}\label{propborneas}
For any $\varepsilon>0$ and any $\delta>0$, there exists some sufficiently
large $L_{\varepsilon,\delta}>0$ such that, if $mF(\tau)>1$,
\[
\p \biggl[ \bigcap_{l\ge0} \biggl\{
\frac{W_{n+l+1}}{W_{n+l}}> mF(\tau )-\delta \biggr\}\Big| W_n\ge
L_{\varepsilon,\delta} \biggr]\ge 1-\varepsilon,
\]
and similarly, if $m(1-F(\theta))>1$,
\[
\p \biggl[ \bigcap_{l\ge0} \biggl\{
\frac{S_{n+l+1}}{S_{n+l}}> m\bigl(1-F(\theta)\bigr)-\delta \biggr\}\Big|
S_n\ge L_{\varepsilon,\delta
} \biggr]\ge1-\varepsilon.
\]
\end{Prop}

\begin{pf}
Choose $1<\tilde m<mF(\tau)$, and take $\delta=mF(\tau)-\tilde m>0$.
Then we can write the following inequalities, for any $L>0$ and any
$n\ge0$:
%
%e70 #&#
%e71 #&#
%e72 #&#
%
\begin{eqnarray}
&&\p \biggl[ \frac{W_{n+l+1}}{W_{n+l}}>\tilde m,\ \forall l\ge 0 \Big|
W_n\ge L \biggr]
\\
&&\qquad =1-\sum_{l=0}^\infty\p [
 W_{n+j+1}>\tilde mW_{n+j}
\nonumber\\[-8pt]\\[-8pt]
 &&\hspace*{75pt}\mbox { for all $0\le j<l$, and
}W_{n+l+1}\le\tilde mW_{n+l}\rrvert W_n\ge L ]\nonumber
\\
&&\qquad \geq 1-\sum_{l=0}^\infty\p \biggl[
 W_{n+j+1}>\tilde mW_{n+j}
\nonumber\\[-8pt]\\[-8pt]
&&\hspace*{77pt} \mbox{ for all $0\le j<l$, and }
\biggl\llvert \frac{W_{n+l+1}}{W_{n+l}}-mF(\tau )\biggr\rrvert \ge\delta\Big|
W_n\ge L \biggr],\hspace*{-25pt}\nonumber
\end{eqnarray}
since $\tilde m=mF(\tau)-\delta$. Now, given $W_n\ge L$, the
inequalities $W_{n+j+1}>\tilde mW_{n+j}$ for all $0\le j<l$ imply that
$W_{n+l}\ge\tilde m^lL$. Using this and dropping the intersection yields
%
%e73 #&#
%e74 #&#
%e75 #&#
%
\begin{eqnarray}
&&\p \biggl[ \frac{W_{n+l+1}}{W_{n+l}}>\tilde m,\ \forall l\ge 0 \Big|
W_n\ge L \biggr]
\\
&&\qquad \geq 1-\sum_{l=0}^\infty\p \biggl[
\biggl\llvert \frac
{W_{n+l+1}}{W_{n+l}}-mF(\tau)\biggr\rrvert \ge\delta \Big|
W_{n+l}\ge \tilde m^lL \biggr]
\\
&&\qquad \geq 1-\sum_{l=0}^\infty\sum
_{j=\tilde m^lL}^\infty\p \biggl[ \biggl\llvert
\frac{W_{n+l+1}}{W_{n+l}}-mF(\tau)\biggr\rrvert \ge\delta \Big| W_{n+l}=j
\biggr]
\nonumber\\[-8pt]\\[-8pt]
&&\qquad \ge1-C\sum_{l=0}^\infty\sum
_{j=\tilde
m^lL}^\infty e^{-cj},\nonumber
\end{eqnarray}
where the last inequality follows from the Chernov-type estimates given
in Lemma~\ref{lemcoff}. Now, since $\tilde m>1$, a straightforward
calculation yields
%
%e76 #&#
%e77 #&#
%
\begin{eqnarray}
\sum_{l=0}^\infty\sum
_{j=\tilde m^lL}^\infty e^{-cj}&=&\sum
_{l=0}^\infty e^{-c\tilde m^lL}\sum
_{j=0}^\infty e^{-cj}=\frac{1}{1-e^{-c}}\sum
_{l=0}^\infty e^{-c\tilde m^lL}
\\
&\leq& \frac{1}{1-e^{-c}}e^{-cL}\sum_{l=0}^\infty
e^{-c(\tilde m^l-1)}\le Ke^{-cL},
\end{eqnarray}
where the constant $K<\infty$ only depends on $c$ and on $\tilde m$.
Taking $L$ large enough thus yields the result. The same argument holds
in the strongest-first case.
\end{pf}

Now using Proposition~\ref{propborneas}, the proof of Theorem~\ref
{thborneas} becomes straightforward:

\begin{pf*}{Proof of Theorem~\ref{thborneas}}
Let $n$ be fixed and choose $0<\delta<mF(\tau)-1$. First consider the event
%
%e78 #&#
%
\begin{equation}
\Omega_{\varepsilon,\delta,L}= \biggl\{\frac{W_{n+l+1}}{W_{n+l}}> mF(\tau )-\delta\mbox{ for
all $l\ge0$, and }W_n\ge L \biggr\}\subset\Omega,
\end{equation}
for any $L>0$. Proposition~\ref{propborneas} means that $\p[\Omega
_{\varepsilon,\delta,L}]\ge(1-\varepsilon)\p[W_n\ge L]$ whenever $L\ge
L_{\varepsilon,\delta}$. On $\Omega_{\varepsilon,\delta,L}$, it is clear that
$W_{n+l}\to\infty$ a.s. as $l\to\infty$, and Lemma~\ref{lemme} then
implies that $W_{n+l+1}/W_{n+l}\to mF(\tau)$ a.s. as $l\to\infty$.
Hence, there exists a sufficiently large $K_{\varepsilon,\delta}\ge0$
such that
%
%e79 #&#
%
\begin{eqnarray}
\p \biggl[\Omega_{\varepsilon,\delta,L}\cap \biggl\{\sup_{l\ge
K_{\varepsilon,\delta}}\biggl
\llvert \frac{W_{n+l+1}}{W_{n+l}}-mF(\tau)\biggr\rrvert <\delta \biggr\} \biggr]&\geq& (1-
\varepsilon)\p[\Omega_{\varepsilon,\delta,L}].
\end{eqnarray}
Choose such a $K_{\varepsilon,\delta}$. Then, since, for any fixed $k$,
Lemma~\ref{lemme} gives $N_k(D_k(j),\break
 R_k(j))/j\to mF(\tau)$ a.s. as
$j\to
\infty$, we deduce that, for some sufficiently large $L'_{\varepsilon,\delta}>0$,
%
%e80 #&#
%
\begin{eqnarray}
\label{eqploplopced}
&& \p \biggl[ \sup_{0\le l< K_{\varepsilon,\delta}}\biggl\llvert
\frac
{W_{n+l+1}}{W_{n+l}}-mF(\tau)\biggr\rrvert <\delta\Big| W_{n+l}\ge
L'_{\varepsilon,\delta},\mbox{ for all $0\le l<K_{\varepsilon,\delta
}$}
\biggr]
\nonumber\\[-8pt]\\[-8pt]
&&\qquad \ge1-\varepsilon.\nonumber
\end{eqnarray}
Now note that, whenever $L\ge L'_{\varepsilon,\delta}$, we have on
$\Omega
_{\varepsilon,\delta,L}$ the inequality $W_{n+l}\ge L'_{\varepsilon,\delta}$
for all $0\le l<K_{\varepsilon,\delta}$, so that equation~(\ref
{eqploplopced}) yields, for any $L\ge L'_{\varepsilon,\delta}$,
%
%e81 #&#
%
\begin{eqnarray}
&& \p \biggl[\Omega_{\varepsilon,\delta,L}\cap \biggl\{\sup_{0\le l<
K_{\varepsilon,\delta}}\biggl
\llvert \frac{W_{n+l+1}}{W_{n+l}}-mF(\tau)\biggr\rrvert <\delta \biggr\} \biggr]
\nonumber\\[-8pt]\\[-8pt]
&&\qquad \ge(1-
\varepsilon)\p[\Omega_{\varepsilon,\delta,L}].\nonumber
\end{eqnarray}
We can thus conclude, for $L''_{\varepsilon,\delta}=\max(L_{\varepsilon,\delta
},L'_{\varepsilon,\delta})$, using, for instance, the Bonferroni inequality,
%
%e82 #&#
%e83 #&#
%
\begin{eqnarray}
&& \p \biggl[\sup_{l\ge0}\biggl\llvert \frac{W_{n+l+1}}{W_{n+l}}-mF(\tau
)\biggr\rrvert <\delta,\mbox{ and }W_n\ge L''_{\varepsilon,\delta}
\biggr]
\nonumber\\[-8pt]\\[-8pt]
&&\qquad \geq \bigl(2(1-\varepsilon)-1\bigr)\p[\Omega_{\varepsilon,\delta,L''_{\varepsilon,\delta
}}]\nonumber
\\
&&\qquad \geq (1-2\varepsilon) (1-\varepsilon)\p\bigl[W_n\ge L''_{\varepsilon,\delta}
\bigr].
\end{eqnarray}
Replacing $\varepsilon$ by $\varepsilon/3$ yields the result. The same
argument holds in the strongest-first case.
\end{pf*}

We can now turn to the proof of Theorem~\ref{thbounds}.

For that purpose, we first prove the following result, which is
interesting in itself:

%pr6.8 #&#
%
\begin{Prop}\label{propprepborn}
Let $(\Gamma_n)_n$ be any RDBP. Assume that $m(1-F(\theta))>1$. Then,
for any $\varepsilon,\delta>0$, there exists some sufficiently large
$L_{\varepsilon,\delta}>0$ such that, for all $n\ge0$,
\[
\p \biggl[ \bigcap_{l\ge0} \biggl\{
\frac{\Gamma
_{n+l+1}}{\Gamma
_{n+l}}\ge\frac{S_{n+l+1}}{S_{n+l}}-\delta \biggr\}\Big| \Gamma
_n=S_n\ge L_{\varepsilon,\delta} \biggr]\ge1-\varepsilon.
\]
\end{Prop}

\begin{pf}
Choose $\tilde m=m(1-F(\theta))-\delta/2$ and take $\delta$ small
enough so that $\tilde m>1$. Then we can write the following
inequalities, similarly as in the proof of Proposition~\ref{propborneas}:
%
%e84 #&#
%e85 #&#
%e86 #&#
%e87 #&#
%
\begin{eqnarray}
&&\p \biggl[ \frac{\Gamma_{n+l+1}}{\Gamma_{n+l}}>\tilde m,\ \forall l\ge0\Big|
\Gamma_n\ge L \biggr]
\\
&&\qquad = 1-\sum_{l=0}^\infty\p [
\Gamma_{n+j+1}>\tilde m\Gamma _{n+j}
\nonumber\\[-8pt]\\[-8pt]
&&\hspace*{75pt} \mbox{ for all $0\le j<l$,
and }\Gamma_{n+l+1}\le\tilde m\Gamma _{n+l}\rrvert \Gamma_n\ge L ]\nonumber
\\
&&\qquad \geq 1-\sum_{l=0}^\infty\p \bigl[
\Gamma_{n+l+1}\le\tilde m\Gamma _{n+l}\rrvert
\Gamma_{n+l}\ge\tilde m^lL \bigr]
\\
&&\qquad \geq 1-\sum_{l=0}^\infty\sum
_{j=\tilde m^lL}^\infty\p [ \Gamma _{n+l+1}\le
\tilde m\Gamma_{n+l}\rrvert \Gamma_{n+l}=j ].
\end{eqnarray}
Now note that, given $\Gamma_{n+l}=S_{n+l}=j$, we have
\[
S_{n+l+1}=M_{n+l}\bigl(D_{n+l}(j),R_{n+l}(j)
\bigr)\le Q^\Gamma _{n+l}\bigl(D_{n+l}(j),R_{n+l}(j)
\bigr)\le\Gamma_{n+l+1}.
\]
Using this and the Chernov-type estimates given in Lemma~\ref
{lemcoff}, we see that for any $\varepsilon>0$, we can find a
sufficiently large $L_{\varepsilon,\delta}>0$ such that
%
%e88 #&#
%e89 #&#
%
\begin{eqnarray}
&& \p \biggl[ \frac{\Gamma_{n+l+1}}{\Gamma_{n+l}}>\tilde m,\ \forall l\ge 0\Big|
\Gamma_n\ge L_{\varepsilon,\delta} \biggr]
\nonumber\\[-8pt]\\[-8pt]
&&\qquad \geq  1-\sum
_{l=0}^\infty \sum_{j=\tilde m^lL}^\infty
\p [ S_{n+l+1}\le\tilde mS_{n+l}\rrvert
S_{n+l}=j ]\nonumber
\\
&&\qquad \geq 1-C\sum_{l=0}^\infty\sum
_{j=\tilde m^lL_{\varepsilon,\delta
}}^\infty e^{-cj}\ge1-\varepsilon,
\end{eqnarray}
where we argued in the second inequality exactly as in the proof of
Proposition~\ref{propborneas}. Putting this together with
Theorem~\ref{thborneas} yields the result.
\end{pf}

We can now immediately deduce Theorem~\ref{thbounds}:

\begin{pf*}{Proof of Theorem~\ref{thbounds}}
On the one hand, if $m(1-F(\theta))>1$, Proposition~\ref
{propprepborn} yields
%
%e90 #&#
%
\begin{equation}
\label{eqborneprep1} \lim_{L\to\infty}\p \Bigl[\lim_{n\to\infty}
\Gamma_n(L)\ge\lim_{n\to\infty
}S_n(L)
\Bigr]=1.
\end{equation}
On the other hand, if $m(1-F(\theta))<1$, then we have proven that
$S_n(L)\to0$ almost surely for any $L>0$ [see Theorem~\ref{th2}(a)(i)],
and thus equation~(\ref{eqborneprep1}) holds trivially. Finally, using
Proposition~\ref{propbounds} for the upper bound for $(\Gamma_n)_n$
yields, as desired,
%
%e91 #&#
%
\begin{equation}
\label{eqborneprep2} \lim_{L\to\infty}\p \Bigl[\lim_{n\to\infty}S_n(L)
\le\lim_{n\to
\infty
}\Gamma_n(L)\le\lim
_{n\to\infty}W_n(L) \Bigr]=1.
\end{equation}
It remains to prove that $q_W=1 \Rightarrow  q_\Gamma=1
\Rightarrow
q_S=1$. The first implication simply follows from Proposition~\ref
{propbounds}. Now, assuming $q_\Gamma=1$, we easily deduce, using the
regularity assumption~(iii) of Section~\ref{chapnatregcond}, that, for
any $L$, we have \mbox{$\Gamma_n(L)\to0$} a.s., as $n\to\infty$. Hence,
equation~(\ref{eqborneprep2}) gives $\p[\lim_nS_n(L)=0]\to1$, as
$L\to
\infty$. It follows then that $q_S=1$, as we can show by contradiction,
using again the regularity assumption (iii).
\end{pf*}

%s6.8 #&#
\subsection{Extinction criterion for the s.f.-society (second part)}\label{proofth2+}
In this section, we finish the proof of Theorem~\ref{th2}, using the
results of previous section (in particular, Theorem~\ref{thborneas}).
Therefore, we will need the boundedness assumption for all the random
variables $D_n^k$, $X_n^k$ and $R_n^k$ [see regularity assumption (v)
of Section~\ref{chapnatregcond}], while only the boundedness of the
$X_n^k$'s was needed for the first part of the proof in Section~\ref
{proofth2}.

\begin{pf*}{Proof of Theorem~\ref{th2}}
The first part of Theorem~\ref{th2}(a) has already been proven in
Section~\ref{proofth2}.

Now look at the second part, assuming $r\le m\mu$ and $m(1-F(\theta
))>1$. Then, Theorem~\ref{thborneas} (or Proposition~\ref
{propborneas}) gives, for any $\varepsilon>0$ and for $\delta>0$ with
$m(1-F(\theta))>1+\delta$,
%
%e92 #&#
%
\begin{eqnarray}
\qquad 1-q_S&\geq& \p \biggl[\bigcap_{l\ge0} \biggl
\{\frac
{S_{n+l+1}}{S_{n+l}}>m\bigl(1-F(\theta)\bigr)-\delta \biggr\} \biggr]\ge (1-
\varepsilon )\p[S_n\ge L_{\varepsilon,\delta}],
\end{eqnarray}
so that, using the regularity assumption (iii) of Section~\ref
{chapnatregcond}, we can deduce $q_S<1$.

Finally, equation~(\ref{eqsupbornes}) of Theorem~\ref{th2} is another
direct consequence of Theorem~\ref{thborneas} (or again
Proposition~\ref{propborneas}).
\end{pf*}

%s6.8.1 #&#
\subsubsection{Alternative criterion for the s.f.-society}

For human societies\break (which constitute here the main focus interest) the
condition that all random variables are bounded
can be well defended. Viewing applications of our results for
populations other than human populations, it may be desirable to do
without the boundedness assumption. Recall that we needed this
assumption only in the proof that the s.f.-process may survive if
$m(1-F(\theta))>1$. Assume $r\le m\mu$ and $\alpha:=m(1-F(\theta))>1$.
Recall also that the condition of bounded claims must be maintained for
the extinction criterion for the s.f.-process. For the other variables we
have, however, an alternative condition:

%le6.9 #&#
%
\begin{Lem}\label{lemstochinc} Suppose that the sequence
$(M(D(t),R(t))_t$ is stochastically increasing in $t$ for all $t $
sufficiently large.
Further, let $\theta$ be as defined in (\ref{thetaequ}). Then
\[
m\bigl(1-F(\theta)\bigr)>1\quad\implies\quad q_S<1.
\]
\end{Lem}

\begin{pf}
If $M(D(t),R(t))_t$ is stochastically increasing in $t$ for all $t$
sufficiently large, then there exists an integer $t_0$, say, such that
for all $v\ge0$ and for all $t\ge t_0$,
\[
\p\bigl[ M\bigl(D(t),R(t)\bigr)\ge v\bigr] \le\p\bigl[M\bigl(D(t+1),R(t+1)\bigr)
\ge v\bigr].
\]
Hence, for $t_1, t, t_2 \in\N$ with $t_0\le t_1\le t \le t_2$, we
deduce, for all $v\ge0$,
\[
\max_{t\in[t_1, t_2]} \p\bigl[ M\bigl(D(t),R(t)\bigr)\ge v\bigr]=\p
\bigl[ M\bigl(D(t_2),R(t_2)\bigr)\ge v\bigr].
\]
The proof of Theorem~\ref{th1}(a)(ii) for the w.f.-process can now be
adapted immediately to the s.f.-process by replacing $mF(\tau)$ by
$m(1-F(\theta))$. Indeed, for $j$ sufficiently large the maximum
probability is always on the right border of the corresponding interval
and thus under control. Inequality~(\ref{inequ}) now remains true for
the s.f.-process as well, and the rest of the proof can be rewritten
accordingly.
\end{pf}

%re6.10 #&#
%
\begin{rem}
We do not know whether $(M(D(t), R(t)))_t$ is stochastically increasing
for sufficiently large $t$ for all choices of distributions of $D_n^k$
and $R_n^k$ with finite second moments. A beginning argument is as follows.
First note that $D(t)$ and $R(t)$ are stochastically increasing in $t$
and that, as we also know,
%
%e93 #&#
%
\begin{equation}
\frac{M(D(t),R(t))}{t} \to m\bigl(1-F(\theta)\bigr)=:\alpha\qquad\mbox{a.s. as }t\to
\infty,
\end{equation}
and the convergence of the expectations $\E [M(D(t),R(t))
]/t$ thus holds as well. Thus we can find $\varepsilon>0$ with $\alpha
-\varepsilon>1$, such that $\E [M(D(t),R(t)) ]$ is increasing
for all $t\ge t_0(\varepsilon)$. This implies that, for all $t \ge
t_0(\varepsilon)$,
%
%e94 #&#
%e95 #&#
%
\begin{eqnarray}
\E \bigl[M\bigl(D(t),R(t)\bigr) \bigr]&=&\sum_{v=0}^\infty
\p \bigl[M\bigl(D(t),R(t)\bigr)\ge v \bigr]
\\
&\leq& \sum_{v=0}^\infty\p \bigl[M
\bigl(D(t+1),R(t+1)\bigr)\ge v \bigr]
\nonumber\\[-8pt]\\[-8pt]
&=& \E \bigl[ M\bigl(D(t+1),R(t+1)\bigr) \bigr].\nonumber
\end{eqnarray}
It would be natural to believe that this inequality holds not only for
the sums, but also for the corresponding terms of the sums, from some
$t$ onward, which would mean that $M$ is stochastically increasing.

Note that $M$ does actually not need to be strictly stochastically
increasing for the proof of Lemma~\ref{lemstochinc} to hold: some
rapidly decreasing error (in $v$ and $t$) can indeed be admitted. More
precisely, it suffices to show that there is some big $t_0$ such that,
for all $t\ge t_0$ and for all $s$,
%
%e96 #&#
%
\begin{equation}
\p \bigl[M\bigl(D(t),R(t)\bigr)\ge s \bigr]\le\p \bigl[M\bigl(D(t+1),R(t+1)
\bigr)\ge s \bigr]+C(t,s),
\end{equation}
where the corrections $C$ must be not too big, in the sense that $\sum_jC(\alpha_\varepsilon^{j-1}s,\break \alpha_\varepsilon^js)<\infty$, where we choose
$\alpha_\varepsilon=\alpha-\varepsilon>1$, for some $\varepsilon>0$ small enough.

However, even such a weakened form seems hard to prove, and we let this
problem as an open question.
\end{rem}

%s7 #&#
\section{Significance of the results}
The fact that the survival criteria for both extreme societies can be
given explicitly makes the Envelopment theorem significant. We first
note that these theorems give extinction/survival criteria in terms of
the parameters $m$ (mean offspring number), $r$ (mean resource
creation) and the distribution function of resource claims $F$ (from
which we also know the mean resource claim $\mu$). Interestingly, in
each case the solution of a last relevant parameter ($\tau$ and
$\theta
$, resp.) is obtained by solving a simple integral equation involving
the \textit{Lorenz curve} known from Economics. Thus the critical
boundaries are explicit.

Now recall the ``safe-haven'' property. We have seen that if the
survival probability of the w.f.-society is strictly positive, then,
however small it may be when starting with few individuals, it
converges quickly to $1$ with increasing size.
We conclude that, provided $q_{W}<1$, any society has always the option
of a very probable survival by letting converge their rules, if
necessary, toward the rules of the w.f.-society. If $q_{W}=1$, however,
then, with a fixed offspring probability law $(p_k)_k$, society must
draw the consequences, because, viewing the chance of survival, there
is no alternative. The individuals live beyond their means and must be
instructed by the society to either become more modest in average
claims of resources or else to increase the average reproduction of resources.

No other society in this model does as much for ensuring survival as
the w.f.-society. The price to pay under the same fixed distribution is
the most modest standard of living of individuals in this society.

The s.f.-society constitutes the other extreme. Under the given
assumptions this society does the most for the standard of living of
the few. However, it jeopardizes the prospects of survival more than
any other society.

Both \textit{extreme} societies form an {envelope} for any society in the
sense that, in the long run, no society can exceed these bounds. We may
call it a \textit{quasi-envelope} because the w.f.-society leads to a
definite uniform upper bound process whereas the s.f.-society leads,
strictly speaking, only to a very probable lower bound process.
However, we know from the conditional envelopment theorem that in the
long run there cannot exist a strictly better lower bound process, so
that it is not a misnomer to speak of an envelope rather than of a
quasi-envelope.

%s7.1 #&#
\subsection{Tractability of the model}
Clearly, RDBPs are still relatively simple models compared with what we
expect we would need to model societies
in a most realistic way. However, there are strong reasons why they
should earn our attention.

First, of course, it is not realistic to look for a perfect model, and,
keeping this in mind, RDBPs seem to be a good approach because they
give considerable room for modeling aspects.

Second, RDBPs yield, as we have seen, not only the envelopment theorem
for societies but also explicit survival criteria in form of
quantifiable critical relationships between society forms. This fact
should not be taken for granted. As we have seen in the general
definition of RDBPs, realistic society forms will typically impose
complicated structures. Almost all interesting forms are too
complicated to be tackled by generating functions or martingale
arguments, the most powerful tools in branching process theory.

Third, RDBPs are remarkably robust. The main results flowing from them
hold in more general settings.
So, in particular, consider the assumption that reproduction within a
RDBP is asexual. It is interesting to know what happens if we replace
this assumption by the natural assumption that reproduction depends on
two sexes [see~\citet{Daley86}; see also~\citet{Molina10} for a review of
known results in this domain].

The answer is in fact in favor of RDBPs. Since survival is only
possible if the RDBP can grow without limits, the asexual reproduction
mean $m$ can here be substituted by the so-called limiting \textit{average
reproduction mean.} The average reproduction mean for a total of $k$
``mating units'' $m(k)$ is defined in equation~(1) of~\citet{Bruss84}. In
the notation of the present paper, it translates into
%
%e97 #&#
%
\begin{equation}
m(k)=\frac{1}k\E \bigl[ \widetilde D_n(\widetilde
\Gamma_n)\rrvert \widetilde \Gamma_n=k \bigr],
\end{equation}
where, unlike $(\Gamma_n)_n$, the modified $(\widetilde\Gamma_n)_n$ counts
now the number of ``mating units'' (and not individuals) present in
generation $n$, and $\widetilde D_n(\widetilde\Gamma_n)$ denotes the number of
mating units generated by these for the next generation. If $\ell:=\lim_{k\to\infty} m(k)$ exists (which is the case for the majority of
natural mating functions), then the specific form of the mating
function becomes irrelevant for extinction criteria as soon as the
population size has become sufficiently large. Hence the generalization
to sexual reproduction may affect the initial chances of reaching
larger numbers of individuals within a RDBP but does not affect the
main results.

Similarly, a little reflection shows that passing from discrete time
generations to more realistic ``moving'' generations makes it
technically harder to define the precise meaning of the strings of
resource claims. However, under some reasonable conditions, there are
ways around the formal problems via discretization, and moving
generations do not impair the essence of the found critical
relationships between the society form, the parameters $\tilde m$, $r$,
$\mu$ and the function $F$.
%s8 #&#
\section{Conclusions}
Returning to the RDBPs we have defined, we shall comment for the
remainder of this paper on real-world conclusions by confining our
interest to the important ones.

On the one hand, we have some intuition that all societies we may think
of should have, for fixed probability laws of natality, of resource
production, and of resource consumption, somewhere their limits. On the
other hand, as we have seen, this intuition is partially wrong and
requires a thorough revision. Rigorous arguments then helped to
overcome the new difficulties. These arguments lead to more subtle
conclusions. The more remarkable is, in our opinion, that, after
refinement, a major part of the original intuition is now proven true.

It is tempting to apply these results by looking at society forms that
we see around us, or at those that mankind has tried in the past. Much
insight may be gained from learning why certain society forms have
failed, and why others seem to do, or to have done, relatively well. It
would be nice to see that scientists who have access to data or
estimates needed for the analysis presented here will find such
questions a real challenge.

In the following, we shortly discuss the main features of a few
selected societies, and how they can be seen as RDBPs.

%s8.1 #&#
\subsection{A brief comparison of major society forms}
%s8.1.1 #&#
\subsubsection{Mercantilism}
Mercantilism was the dominant policy for western societies for most of
the 16th century up to the end of the 18th century, and in some
countries even to the beginning of the 19th century. There are several
forms of mercantilism, but with respect to RDBPs, there is a common
denominator to all different forms of mercantilism, that is, as we
shall argue, a state-controlled ``head-and-tail policy'' of distributing
resources.

The philosophy of mercantilism is that the wealth of a nation, compared
with the wealth of other nations, is a zero-sum game. This implies that
leaders concentrate their interest on the competition between different
states for a common fixed wealth of the world. At the same time, the
idea behind was also that a rich country can afford a strong army to
defend wealth. Strict mercantilists, exemplified by Colbert in France,
concluded that all what counts is getting the wealth into the own
country by exports and keeping production costs of goods as low as
possible. A~positive balance of trades was the main concern; imports
were highly taxed.

In the interpretation of RDBPs, the members of the government or
kingdom as well as rich merchants are typically those with the larger
claims. They form the ``head,'' but they are relatively small in number.
The ``tail'' consists of the claims of those individuals in the
population who have to produce (farmers, workers, etc.). In agreement
with the philosophy of mercantilism, money which was spent was seen as
lost. Hence production costs were kept as low as possible. The right of
emigration was denied in some countries during certain periods, and in
such cases unsatisfied claims must be interpreted as removal by death
in RDBPs. The tail is made of the modest claims, but now to be
satisfied from a reduced resource space.

In words of RDBPs mercantilism may be seen as a hybrid society, one
part living under a s.f.-policy the other part (the last majority of
people) under an enforced w.f.-policy. It can be modeled as the ``sum'' of
two RDBPs, by assuming (e.g.) that a certain percentage $p$ of
the common resource space is reserved for the rich and $1-p$ for the poor.

%s8.1.2 #&#
\subsubsection{Enlightened mercantilism}
As we understand today, mercantilism suffered from a lack of
experience, or, as critics would say, a lack of understanding. The
rules of import and export were very strict, and the idea that free
trade creates value and that wealth is by far not a zero-sum game had
to await the arrival of great economists.

Although economists like Dudley North, John Locke and Adam Smith in
particular, undermined much of mercantilism, and saw themselves as
anti-mercantilists, one may call them today \textit{enlightened}
mercantilists. The new ideas of the great value of free trade and of a
motivated work force seemed almost revolutionary, and it is true that
they brought a great change and opened the way to more recent economic
societies. However, at the beginning, a part of the philosophy of
mercantilism was still in force. So, for instance, the priority of
inexpensive production before fair compensation for work was still
rather present, and not all defendants of the new ideas accepted
already the conclusion that the wealth of nations is no zero-sum game.

The graph of claims in a society under enlightened mercantilism,
modeled by an RDBP, would therefore still resemble that of
mercantilism, and we would suggest a similar approach to model it as a
hybrid w.f.-society and s.f.-society. There are two clear differences:
first, nations became richer once the barriers to free trade imposed by
the classical mercantilism were relaxed. Hence, the available resource
space $s$ should be seen as getting significantly larger. Second, the
barrier between small claims and large claims became much more fluent
because the poor ones could escape more easily the society into which
they were born.

%s8.1.3 #&#
\subsubsection{Modern western societies}
As much as we might see large differences between western nations and
their society forms, we are reminded that everything is just relative.
Seen as RDBPs with their graph of resource claims, these societies are
not really that different. On the one hand many have quite an important
common denominator, namely a rather solid social security system. On
the other hand, they typically display society forms which leave quite
some room for individual freedom and performance. They are ranging from
(controlled) free societies up to a laissez-faire societies. The first
feature (social security systems) implies that the smallest claims are,
compared with the larger ones, much bigger than in mercantilist
societies, and the second one that the larger claims go in all of them
through a wide range. It is this mixture of a laissez-faire society
behavior as well as \mbox{w.f.-}features and s.f.-features, which would make it
hard to model western societies by RDBPs in such a way that finer
differences between modern western societies show up clearly.

%s8.1.4 #&#
\subsubsection{Capitalism}
Concerning capitalism, RDBPs can tell much more. As indicated before,
the s.f.-society shares features of an extreme form of capitalism, and it
is easy to understand why. The larger claims stem from the class of
people who have the power to defend them within society and are thus
strongly correlated with financial power. The graph of claims would
look similar to the one for the s.f.-society.

In one point, things are of course different. It suffices to ask who
would effectively still create the resources which are made available
by manual work. Even in extreme forms of capitalism, there must still
be some space for the lower claims. Still, the s.f.-process is probably
the most convincing RDBP-approach to modeling capitalism.

%s8.1.5 #&#
\subsubsection{Communism}
Correspondingly, the w.f.-society shares undeniably features with an
extreme form of communism. If means of production are completely in the
hands of the people's society, the larger claims are no longer backed
by personal power due to individual ownership. Hence, the smaller
claims, backed by the masses, are automatically given priority.

Again, exception must be made for the top political class, but in
comparison to the whole population the number of individuals in this
class is very small. Hence the overall picture of the resource claim
distribution over the whole space of currently available resources
would have much similarity with the one of the \mbox{w.f.-}society, and the
corresponding RDBP is  well-understood.

%s8.1.6 #&#
\subsubsection{Extreme communism versus extreme capitalism}
If we accept the comparison of the w.f.-society and the s.f.-society with
extreme versions of communism and capitalism, respectively, then these
two societies, seen as RDBPs form, as we have seen, an envelope for all
society forms.

It is a strange envelope indeed: namely, its boundaries attract and
repulse individuals at the same time. Neither of the extreme societies
can be expected to be seen by individuals as attractive, at least not
if resources are limited. People want to get away from extreme
communism by Hypothesis~\ref{hyp2}. They would like to increase their standard
of living as far as possible and thus leave extreme communism as
quickly as possible.

In the extreme form of capitalism, things are equally unstable.
Neglecting the case of arbitrarily many resources for everybody, the
greed of the strongest ones kills the population from above because,
typically, a large proportion of the people will leave. This society
will soon have to revise thoroughly its philosophy. One reason is, as
shown in this article, the necessity to have more people to increase
the survival probability. Another reason is that there must be enough
people to make the present resources available.

It is difficult to imagine that extreme capitalism could survive in
reality. In our RDBP model, it is possible under the condition
$m(1-F(\theta))>1$. This imposes not only a high productivity, but also
that individual resource creations stay i.i.d. random variables.
However, work forces from different society classes are in reality not
easily exchangeable.

%s8.1.7 #&#
\subsubsection{Marxism}
Returning to Hypothesis~\ref{hyp2} and our conclusion that extreme communism
will never last long, RDBPs give, in a certain way, reason to Marx for
the facts, although not for the conclusion. Marx saw that it may be
hard to convince people of the advantages of his new ideology. He
believed that the ``Mehrwert'' (added value) of labor would then mainly
benefit the working class who \textit{creates} value, and that, as a
result, income of the lowest class would go up. In RDBPs, this \textit{does not} happen unless the mean production per individual would go up.

One has to give to Marx that he was hoping that this would be the case,
that is, that not only the income of the lower class but also the
overall productivity would go up. Many opponents of communism are
convinced of the very contrary, namely that the loss of personal
advantages in the ideal communistic society will bring productivity down.

We have to stay neutral on the latter discussion because our analysis
touches only the critical relationship between demand and productivity,
and not what is behind. What RDBPs show [compare with example~(i) in
Section~\ref{exs}] is that the added value would have to be
considerable to convince people that their search for a higher standard
of living would be of little importance.

When Marx formulated his famous promise ``From each according to his
ability, to each according to his needs,'' then there is nothing
remarkable about this in a RDBP. If a low productivity of an individual
comes together with a large claim, then it must be expected to happen
in a model with i.i.d. random variables. Any society can propose to
serve claims independently of their size, provided that the resource
space allows for it. But this is the point. To know whether the
resource space will allow for the advertised generosity, Marx would
face the crucial question of productivity.

%s8.1.8 #&#
\subsubsection{Leninism}
Coming back to the question of added value, Lenin was, compared to
Marx, much more radical in his views. He went so far as advocating the
new ideas to be imposed by a vanguard party approach, that is, a
revolution. And this is what he got. One serious question should be
asked here. Why did Lenin push toward a revolution before knowing more
about the alleged added value by moving production tools in the hands
and property of the working class? This was completely new territory;
there was no prior experience, there were no data. What does not work
in theory cannot work in practice. For RDBPs at least it cannot work in
theory if there is no added value in creating resources.
%A revolution is a priori more likely to eat up resources than create
%resources,
And it is difficult to imagine that Lenin could possibly have known
more about the added value in Marx's ``model'' than Marx knew himself.

%s8.2 #&#
\subsection{Comparing ideologies}
As we have seen, when maintaining the mean resource creation constant,
extreme communism cannot be a stable society. Nevertheless, it has one
\textit{undeniable and distinguished} advantage worth repeating: with
fixed resource creation (productivity), it is superior to any other
society with respect to survival probability. The price to pay is a low
average standard of living. Extreme capitalism leads to the highest
possible average standard of living. However this does not mean much in
a society where so many of the next generation may have to emigrate.
Note that in our RDBP-model this fate may hit equally likely the own
descendants so that the conclusions may pass the test of ``political
correctness'' so frequently evoked today.

Individuals typically wish to increase their standard of living and
Hypothesis~\ref{hyp2} will push extreme communism more and more into
consumption, that is, into a ``stronger'' first society of some kind. If
this move stays without control, then it will approach capitalism, the
declared enemy of communism.

No wonder capitalism and communism are declared enemies. There is more
to it than politics, more than just a great difference of ideology.
Seen as RDBPs, the difference between them is fundamental because they
are as opposed to each other with respect to the natural Hypotheses~\ref{hyp1}~and~\ref{hyp2}
as they possibly can be. The extreme communism is doing
everything for Hypothesis~\ref{hyp1} and nothing for Hypothesis~\ref{hyp2}, and the
extreme capitalism does exactly the opposite. In the setting of RDBPs,
both opponents are born losers, as we have seen, and there is little
reason for born losers to be declared enemies. Societies may test these
limits, as they have done, but any attractive society is bound to be
somewhere else within the envelope of societies.

%s8.3 #&#
\subsection{A modest outlook}
If mankind has seen several other variations of societies which seem
more attractive than the extreme societies we had to discuss before
because they stood out, then this is due to an increase of
understanding. Understanding the mechanisms and the impact of financial
instruments of a national economy, including all the instruments of
monetary policy and labor policy, these are essential factors as we
believe today, and there is little reason to doubt this. However, there
is also little reason to ignore the message of RDBPs. If the model is
accepted, then the interesting move of any society will always be
toward something lying between the w.f.-society and the s.f.-society. Not
accepting this conclusion means not agreeing with Hypotheses \ref{hyp1} and~\ref{hyp2}.
As far as we are aware, nothing definite as this has ever been stated
and proven before.

Could data help mankind converge in its search for an optimal policy
toward an ``optimum?'' Hopefully yes, at least toward something like an
optimum region. Nevertheless, Hypotheses \ref{hyp1} and~\ref{hyp2} will always be in
force, and hence fluctuations of policies, even around an alleged
optimum must be expected to be part of the game.

To know to what extent, this is much harder. As we understand, \textit{dynamical} policies should now be defined and studied,
whereas this paper studies the development of a society by control decisions based on \textit{static} RDBPs. The crucial question is then how to model
\textit{dynamical} policies in such a way that the tractability of the
global model (already an important issue in the present paper) can be maintained. We hope that this article may
help to pave the way for further research.

% zodis "Acknowledgments" paliekamas pagal autoriu
\section*{Acknowledgments}
The authors are grateful to the referee, to the Associate
Editor and to the Editor for many critical
remarks and stimulating comments.

%suskaldyti doi

% imsref loaded by linak, 2014-03-26 14:26:02
% imsref loaded by linak, 2014-03-27 16:26:02

\printaddresses

\end{document}